\theoremstyle{plain}
\newtheorem{theorem}{Theorem}
\newtheorem{corollary}[theorem]{Corollary}
\newtheorem{lemma}[theorem]{Lemma}
\newtheorem{proposition}[theorem]{Proposition}
\theoremstyle{definition}
\newtheorem{definition}[theorem]{Definition}
\newtheorem{remark}[theorem]{Remark}
\numberwithin{theorem}{section}
\newcommand{\Hom}{\mbox{Hom}}
\newcommand{\ihom}{\mathscr{H}om}
\newcommand{\iHom}{\mathscr{H}om_{\textrm{qc}}}
\newcommand{\res}{\operatorname{res}}
\newcommand{\Ker}{\mbox{Ker}}
\newcommand{\Ext}{\mbox{Ext}}
\newcommand{\Filt}{\mbox{Filt}}
\newcommand{\Pure}{\mathbf{Pure}}
\newcommand{\Flat}{\mathcal Flat}
\newcommand{\Purefp}{\mathbf{Pure_{fp}}}
\newcommand{\Flatfp}{\mathcal Flat_{fp}}
\newcommand{\Pinjfp}{\mathcal Pinj_{fp}}
\newcommand{\Pinj}{\mathcal Pinj}
\newcommand{\Flatt}{\mathcal Flat_{\otimes}}
\def\O{\mathcal{O}}
\def\Qco{\mathfrak{Qcoh}}
\def\Mod{\mbox{-Mod}}
\def\scrM{\mathscr{M}}
\def\scrN{\mathscr{N}}
\def\scrA{\mathscr{A}}
\def\scrF{\mathscr{F}}
\def\scrT{\mathscr{T}}
\def\scrH{\mathscr{H}}
\def\scrG{\mathscr{G}}
\def\scrST{\mathscr{S}}
\def\scrR{\mathscr{R}}
\def\scrE{\mathscr{E}}
\begin{document}
\title{Pure injective and absolutely pure sheaves }
\author{ Edgar Enochs \\ enochs@ms.uky.edu \\
Department of Mathematics \\
University of Kentucky \\ 40506 Lexington, Kentucky \\ USA \\ \\
Sergio Estrada \\ sestrada@um.es \\
Departamento de Matem\'aticas \\
Universidad de Murcia \\ 30100 Murcia \\ SPAIN \\ \\
 Sinem Odaba\c{s}{\i} \\  \ sinem.odabasi1@um.es \\
\ \
Departamento de Matem\'aticas \\
\ \ \ Universidad de Murcia \\
\ \ 30100 Murcia \\
SPAIN \footnote{Estrada and Odaba\c{s}{\i}\  have been supported by the research grant 18394/JLI/13 by the Fundaci\'on S\'eneca-Agencia de Ciencia y Tecnolog\'{\i}a de la Regi\'on de Murcia in the framework of III PCTRM 2011-2014. Odaba\c{s}{\i} has been supported by the  Consejer\'{\i}a de Industria, Empresa e Innovaci\'{o}n de la CARM by means of Fundaci\'{o}n S\'{e}neca, the program of becas-contrato predoctorales de formaci\'{o}n del personal investigador, 15440/FPI/10.} }
\date{}
\maketitle
\thispagestyle{empty}
\begin{abstract}
We study two notions of purity in categories of sheaves: the categorical and the geometric. It is shown that pure injective envelopes exist in both cases under very general assumptions on the scheme. Finally, we introduce the class of locally absolutely pure (quasi--coherent) sheaves, with respect to the geometrical purity, and characterize locally Noetherian closed subschemes of a projective scheme in terms of the new class.
\end{abstract}

{\footnotesize{\it 2010 Mathematics Subject Classification. 18E15,18C35,18D10,18F20,14A15,16D90.}
}%

{\footnotesize{\it Key words and phrases}: Local purity, categorical purity, concentrated scheme, locally finitely presented category, pure injective sheaf, absolutely pure sheaf.}

\section{Introduction}
The
history of purity goes back to the work of \cite{Prufer} for abelian
groups. Later the notion was introduced into  module categories by
\cite{Cohn}. The notion, which is a problem of solving equations
with one variable in abelian groups turned out to be that of solving equations in several
variables in module categories. The notion was developed further in
\cite{Fieldhouse}, \cite{S1}, \cite{S}, \cite{Sten}. More recently it was shown by Crawley-Boevey in \cite{CB} that locally finitely presented additive categories were the most general additive setup in which to define a good purity theory. We recall that a short exact sequence in a locally finitely presented category is said to be pure whenever it is projectively generated by the class of finitely presented objects.  Several
problems in algebra and in relative homological algebra
can be solved by purity arguments. For instance to show that a class of objects of a class $\mathcal F$ in a locally finitely presented category $\mathcal A$ allows to define unique up to homotopy minimal resolutions, it suffices to check that $\mathcal F$ is closed under direct limits and under pure subobjects or under pure quotients (see \cite{Bashir,CPT}). In the case of the category of $R$-modules ($R$ any commutative ring with identity) it is well-known that purity can be also defined in terms of the tensor product, that is, a short exact sequence $\mathbb{E}=0\to L\to M\to N\to 0$ of $R$-modules is pure provided that the functor $\mathbb{E}\otimes T$ leaves the sequence exact, for each $R$-module $T$. But in general for an arbitrary monoidal locally finitely presented category these two notions need not be equivalent. For instance, when $X$ is a concentrated (i. e. quasi--compact and quasi--separated) scheme, the category $\Qco(X)$ of quasi-coherent sheaves on $X$ is locally finitely presented (see \cite[I.6.9.12]{GD} or \cite[proposition 7]{Garkusha} for a precise formulation) and it comes equipped with a canonical tensor product, so one might wonder about the relationship (if any) between the two possible definitions of purity: the categorical one arising from the general fact that we are working with a locally finitely presented category, or the second one, arising from the usual tensor product in $\Qco(X)$. We shall denote by fp-pure the notion of purity in the first sense and just by pure in the second. Thus the first part of this paper is devoted to exploring the relation between the two notions on $\Qco(X)$. Actually we will consider a slightly different notion of purity in $\Qco(X)$. Namely, we say that a short exact sequence $\mathbb{E}$ in $\Qco(X)$ is pure exact provided that $\mathbb{E}\otimes \scrM$ is exact for each sheaf of $\O_X$-modules $\scrM$ (so not just the quasi-coherent ones). As we point out in the remark \ref{rem} this is equivalent to $\mathbb{E}\otimes -$ being exact in $\Qco(X)$ provided that $X$ is quasi-separated. The reason for considering this more general definition is that it is always equivalent to purity on the stalks (proposition \ref{pure1}). So this justifies its geometrical nature.

From this point of view, the present paper can be seen as a continuation on the ongoing program initiated in the work \cite{ES} where a wide class of projective schemes was exhibited that do not have nontrivial \emph{categorical flat} quasi--coherent sheaves (that is, quasi--coherent sheaves such that each short exact sequence ending on them is fp-pure). If we denote by $\Purefp$ and by $\Pure$ the classes of fp-pure and pure short exact sequences in $\Qco(X)$, we prove the following result (proposition \ref{pure3}):

\medskip\par\noindent
{\bf Proposition.} If $X$ is a concentrated scheme, $\Purefp \subseteq \Pure$.

\medskip\par
In particular this allows us to clarify the general relation between categorical and geometrical flatness for concentrated schemes (corollary \ref{relacion.flatfp.flattensor}):
\medskip\par\noindent
{\bf Proposition.}
Assume that $X$ is quasi--compact and semi--separated. Then each categorical flat sheaf in $\Qco(X)$ is geometrical flat.

\medskip\par
The converse is not true, in general, for non-affine schemes. This is one of the main results in \cite{ES} (\cite[theorem 4.4]{ES}).

Section 4 of the paper is devoted to showing that pure injective envelopes do exist with respect to both notions of purity.
The first proof is a particular instance of a theorem due to Herzog in \cite{Herzog} (see also \cite{Garcia}) on the existence of pure injective envelopes in locally finitely presented additive categories:

\medskip\par\noindent
{\bf Theorem.} Let $X$ be a concentrated scheme. Then every quasi-coherent sheaf in $\Qco(X)$ admits an fp-pure injective envelope, which is an fp-pure monomorphism.

\medskip\par
However, we can show that pure injective envelopes with respect to the geometrical purity always exist, without assuming any condition on the scheme (theorem \ref{tensor-pure.injenvelope}):

\medskip\par\noindent
{\bf Theorem.} Let $X$ be any scheme. Each quasi-coherent sheaf in $\Qco(X)$ has a pure injective envelope which is a pure monomorphism.

\medskip\par
In Section 5 we will focus on the geometrical pure notion in $\mathcal O_X\Mod$ and $\Qco(X)$ and we introduce the classes of (locally) absolutely pure sheaves of modules and quasi-coherent sheaves. Given an associative ring $R$ with unit, a left $R$-module is absolutely pure if every finite system of linear equations whose independent terms lie in $M$ possesses a solution in $M$. This is equivalent to saying that $M$ is a pure submodule of any $R$-module that contains it. In some aspects these behave like injective $R$-modules (see \cite{Fieldhouse,Maddox,Megibben,Prest2,Sfp} for a general treatment of absolutely pure modules and \cite{Pinzon} for a revisited study). In fact, Noetherian rings can be characterized in terms of properties of absolutely pure modules. Namely, $R$ is Noetherian if, and only if, the class of absolutely pure $R$-modules coincides with the class of injective $R$-modules (\cite{Megibben}). We will exhibit the main properties of (locally) absolutely pure sheaves of modules, both in $\Qco(X)$ and in $\mathcal O_X\Mod$, in case $X$ is locally coherent scheme. For instance we show in proposition \ref{ap3} that local absolutely purity in $\Qco(X)$ can be checked on a particular affine covering of $X$. And we also see that locally absolutely quasi-coherent sheaves are precisely the absolutely pure $\mathcal O_X$-modules that are quasi-coherent. This is analogous to the
question posted in \cite[II, \S 7, pg.135]{Hartshorne} for locally Noetherian schemes (cf. \cite[lemma 2.1.3]{Conrad}). Then we characterize locally Noetherian closed subschemes of the projective space ${\mathbb P}^n(A)$ in terms of its class of absolutely pure quasi-coherent sheaves:

\medskip\par\noindent
{\bf Theorem. } A locally coherent closed subscheme $X\subseteq \mathbb{P}^n(A)$ is locally Noetherian if, and only if, every locally absolutely pure quasi-coherent sheaf is locally injective.

\medskip\par
If $X$ is a Noetherian scheme, it is known that the class of locally injective quasi--coherent sheaves is a covering class in $\Qco(X)$. We finish this section by extending this result to the class of locally absolutely pure quasi-coherent sheaves on a locally coherent scheme $X$.

\medskip\par\noindent
{\bf Theorem. } Let $X$ be a locally coherent scheme. Then every quasi--coherent sheaf in $\Qco(X)$ admits a locally absolutely pure cover.

\bigskip\par\noindent

\section{Preliminaries}
In this work, all rings used will be commutative with identity.

Following \cite{CB}, an additive category $\mathcal A$ with direct limits is said to be \emph{locally finitely presented} provided that the skeleton of the subcategory of finitely presented objects in $\mathcal A$ is small, and each object of $\mathcal A$ is a direct limit of finitely presented objects. Here an object $A$ in $\mathcal A$ is called \emph{finitely presented} if the functor $\Hom_{\mathcal A}(A,-)$ preserves direct limits.

For instance, for any ring $S$ (not necessarily commutative and with unit) the category $S\Mod$ of left $S$-modules, is locally finitely presented \cite{L}. If $X$ has a basis of compact open sets then the category $\mathcal O_X\Mod$ of all sheaves of $\mathcal O_X$-modules, is locally finitely presented (see, for example, \cite[Theorem 5.6]{Prest} or \cite[Theorem 16.3.17]{Prest2}). If $X$ is a concentrated scheme (i.e. quasi--compact and quasi--separated) then the category $\Qco(X)$ of quasi--coherent sheaves of $\mathcal O_X$-modules is also locally finitely presented (see \cite[Proposition 7]{Garkusha} for a proof based on \cite[I.6.9.12]{GD}).

We recall that a short exact sequence of $R$-modules $0\to L\to M\to N\to 0$ is called \emph{pure} if $T\otimes L\to T\otimes M$ is a monomorphism for every $R$-module $T$. This is equivalent to $0\to\Hom_R(F,L)\to\Hom_R(F,M)\to \Hom_R(F,N)\to 0$ being exact, for each finitely presented $R$-module $M$. The last condition can be adapted to give the usual definition of a sequence  $0\to L\to M\to N\to 0$ of morphisms to be pure exact in an arbitrary locally finitely presented category $\mathcal A$.

\begin{definition}
Let $\mathcal{C}$ be a Grothendieck category. A  direct system of
objects of $\mathcal{C}$, $(M_\alpha \mid \alpha \leq \lambda)$, is
said to be a \emph{continuous system of monomorphisms} if $M_0=0$,
$M_\beta= \varinjlim _{\alpha< \beta} M_{\alpha}$ for each limit
ordinal $\beta \leq \lambda$ and all the morphisms in the system are
monomorphisms.

Let $\mathcal{S}$ be a class of objects which is closed under
isomorphisms. An object $M$ of $\mathcal{C}$ is said to be
\emph{$\mathcal{S}$-filtered} if there is a continuous system $(M_\alpha
\mid \alpha \leq \lambda)$ of subobjects of $M$ such that
$M=M_{\lambda}$ and $M_{\alpha+1}/M_{\alpha}$ is isomorphic  to an
object of $\mathcal{S}$ for each $\alpha< \lambda$.
\end{definition}

The class of $\mathcal{S}$-filtered objects in $\mathcal{C}$ is
denoted by $\Filt(\mathcal{S})$. The relation $\mathcal{S} \subseteq
\Filt(\mathcal{S})$ always holds. In the case of being
$\Filt(\mathcal{S}) \subseteq \mathcal{S}$, the class $\mathcal{S}$
is said to be \emph{closed under $\mathcal{S}$-filtrations.}

\begin{definition}
Let $\mathcal{F}$ be a class of objects of a Grothendieck category $\mathcal{A}$. A morphism
$\phi : F \rightarrow M$ of $\mathcal{C}$ is said to be an
\emph{$\mathcal{F}$-precover} of $M$ if $F \in \mathcal{F}$ and if
$\Hom(F',F)\rightarrow \Hom (F',M) \rightarrow 0$ is exact for every
$F' \in \mathcal{F}$. If any morphism $f:F \rightarrow F$ is such that
$ \phi \circ f= \phi$ is an isomorphism, then it is called an
\emph{$\mathcal{F}$-cover} of $M$. If the class $\mathcal{F}$ is such that
every object has an $\mathcal{F}$-cover, then $\mathcal{F}$ is
called a \emph{precovering class}. The dual notions are those of
$\mathcal{F}$-envelope and enveloping class.
\end{definition}

\section{Purity in $\Qco(X)$}
Let $X$ be a scheme and  $\scrF,\scrG$ be $\O_X$-modules. The tensor product $\scrF \otimes \scrG$ is defined as the sheafification of the presheaf $U \rightarrow \scrF (U) \otimes_{\O_X(U)} \scrG (U)$, for each open subset $U\subseteq X$. There is also an internal $\Hom$ functor in $\O_X\Mod$, $\ihom(-,-)$. The image $\ihom(\scrF,\scrG)(U)$ on an open subset $U \subseteq X$ is  $\Hom(\scrF|_U,\scrG|_U)$. It is known that the pair $(-\otimes-, \ihom(-,-) )$ makes $\O_X\Mod$ a closed symmetric monoidal category.
\begin{definition}\label{pure33}
Let $0 \rightarrow \scrF\stackrel{\tau}{\to} \scrG$ be an exact sequence  in $\O_X\Mod$. This is called  \emph{pure exact} if, for each $\scrM\in \O_X\Mod$, the induced sequence $$ 0\to \scrM\otimes \scrF\stackrel{id\otimes\tau}{\longrightarrow}\scrM\otimes \scrG$$ is exact.
\end{definition}
\begin{proposition}\label{pure1}
Let $0 \rightarrow \scrF\stackrel{\tau}{\to} \scrG$ be an exact sequence in $\O_X\Mod$. The following conditions are equivalent:
\begin{enumerate}
\item  the sequence is pure exact.
\item For each $x\in X$ the monomorphism $0 \rightarrow \scrF_x\stackrel{\tau}{\to} \scrG_x$ in $\O_{X,x}\Mod$, is pure.

\end{enumerate}
\end{proposition}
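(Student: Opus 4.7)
The plan is to reduce the equivalence to two standard facts about sheaves of modules, after which both implications become routine. The first fact is that the stalk functor is compatible with the tensor product: for any $\scrM, \scrN \in \O_X\Mod$ and every $x \in X$ there is a canonical isomorphism $(\scrM \otimes \scrN)_x \cong \scrM_x \otimes_{\O_{X,x}} \scrN_x$. The second is that a morphism in $\O_X\Mod$ is a monomorphism if and only if its stalk at every $x \in X$ is a monomorphism of $\O_{X,x}$-modules.

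For the implication $(1) \Rightarrow (2)$, fix $x \in X$ and an arbitrary $\O_{X,x}$-module $T$; I need to realize $T$ as the stalk at $x$ of some $\O_X$-module. The natural candidate is the skyscraper sheaf $\scrM := i_{x,*} T$ associated to the inclusion $i_x : \{x\} \hookrightarrow X$, defined by $\scrM(U) = T$ if $x \in U$ and $0$ otherwise, with the $\O_X$-module structure induced by the canonical ring maps $\O_X(U) \to \O_{X,x}$. A direct computation gives $\scrM_x = T$. The pure-exactness hypothesis applied to $\scrM$ then says that $\scrM \otimes \scrF \to \scrM \otimes \scrG$ is a monomorphism; taking stalks at $x$ and applying the tensor-stalk identity yields exactly the desired injectivity of $T \otimes_{\O_{X,x}} \scrF_x \to T \otimes_{\O_{X,x}} \scrG_x$.

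For the converse $(2) \Rightarrow (1)$, given any $\scrM \in \O_X\Mod$, the stalk at $x$ of $\textrm{id}_\scrM \otimes \tau : \scrM \otimes \scrF \to \scrM \otimes \scrG$ is the map $\textrm{id}_{\scrM_x} \otimes \tau_x : \scrM_x \otimes_{\O_{X,x}} \scrF_x \to \scrM_x \otimes_{\O_{X,x}} \scrG_x$, which is a monomorphism by the stalkwise purity hypothesis applied to the test module $T := \scrM_x$. As this holds at every point, the second fact above forces $\textrm{id}_\scrM \otimes \tau$ itself to be a monomorphism.

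There is no serious obstacle in this argument; the only delicate point is the tensor-stalk identity, which follows from the fact that sheafification commutes with filtered colimits, together with the analogous identity $(M \otimes_R N)_{\mathfrak{p}} \cong M_{\mathfrak{p}} \otimes_{R_{\mathfrak{p}}} N_{\mathfrak{p}}$ at the level of commutative rings. One should also verify that the skyscraper construction $i_{x,*}T$ does produce a genuine sheaf of $\O_X$-modules (rather than merely of abelian groups), but this is immediate from the compatibility of the restriction maps $\O_X(V) \to \O_X(U) \to \O_{X,x}$ with the $\O_{X,x}$-action on $T$.
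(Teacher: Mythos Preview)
Your proof is correct and follows essentially the same route as the paper's own argument: both directions rest on the tensor--stalk identity $(\scrM\otimes\scrN)_x\cong\scrM_x\otimes_{\O_{X,x}}\scrN_x$, the fact that monomorphisms in $\O_X\Mod$ are detected stalkwise, and the use of the skyscraper sheaf $i_{x,*}T$ to realize an arbitrary $\O_{X,x}$-module $T$ as the stalk at $x$ of some $\O_X$-module. Your write-up is slightly more explicit about why these auxiliary facts hold, but there is no substantive difference in strategy.
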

\begin{proof}
$1.\Rightarrow 2.$ Let $M\in \O_{X,x}\Mod$. Then  $i_{x,*}M$ (the skyscraper sheaf with respect to $M$) is an $\O_X$-module such that $(i_{x,*}M)_x=M$.
Since $0\to \scrF\stackrel{\tau}{\to} \scrG$ is pure, $$0\to i_{x,*}M\otimes \scrF\stackrel{}{\to} i_{x,*}M\otimes \scrG$$ is exact, that is, for each $x\in X$, $$0\to (i_{x,*}M\otimes \scrF)_x\stackrel{}{\to} (i_{x,*}M\otimes \scrG)_x$$ is exact in $\O_{X,x}\Mod$. But for each $\scrA\in \O_{X,x}\Mod$, $(i_{x,*}M\otimes \scrA)_x\cong M\otimes \scrA_x$. Hence, from the previous, we follow that $$0\to M\otimes \scrF_x\stackrel{}{\to} M\otimes \scrG_x $$ is exact in $\O_{X,x}\Mod$. So $0\to \scrF_x\stackrel{\tau}{\to} \scrG_x$ is pure.

\medskip\par\noindent
$2.\Rightarrow 1.$ Let  $0 \rightarrow \scrF\stackrel{\tau}{\to} \scrG$ be an exact sequence in $\O_X\Mod$ (so, for each $x\in X$, $0\to \scrF_x\stackrel{\tau_x}{\to} \scrG_x$ is exact in $\O_{X,x}\Mod$). Given $\scrM\in \O_{X}\Mod$, the induced $ \scrM\otimes \scrF\stackrel{id\otimes\tau}{\longrightarrow}\scrM\otimes \scrG$ will be a monomorphism if, and only if, for each $x\in X$ the morphism of $\O_{X,x}$-modules $(\scrM\otimes \scrF)_x\stackrel{(id\otimes\tau)_x}{\longrightarrow}(\scrM\otimes \scrG)_x$ is a monomorphism. But, for each $x\in X$, and $\scrA\in \O_X\Mod$, $(\scrM\otimes \scrA)_x\cong \scrM_x\otimes \scrA_x$. So by $2.$ it follows that $ \scrM\otimes \scrF\stackrel{id\otimes\tau}{\longrightarrow}\scrM\otimes \scrG$ is a monomorphism. Therefore $0\rightarrow \scrF\stackrel{\tau}{\to} \scrG$ is pure.
\end{proof}

\begin{proposition}\label{pure-allafine}
Let $X$ be a scheme and $\scrF,\scrG\in \Qco(X)$. The following conditions are equivalent:
\begin{enumerate}
\item $0\to \scrF\stackrel{\tau}{\to} \scrG$ is pure exact.
\item $0\to \scrF(U)\stackrel{\tau_U}{\longrightarrow} \scrG(U)$ is pure in $\O_X(U)\Mod$, for each open affine $U\subseteq X$.
\end{enumerate}
\end{proposition}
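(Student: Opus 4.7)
The plan is to pivot on Proposition~\ref{pure1}, which reformulates pure exactness in $\O_X\Mod$ as purity of each stalk in $\O_{X,x}\Mod$, and then to transport the statement across an affine open $U=\Spec A$ (where $A=\O_X(U)$) using the standard local--global principle for purity of $A$-modules. For $x\in U$ corresponding to a prime $\mathfrak p\subset A$ I will use the quasi-coherence of $\scrF,\scrG$ to identify $\scrF_x\cong\scrF(U)_{\mathfrak p}$ and $\scrG_x\cong\scrG(U)_{\mathfrak p}$ as $\O_{X,x}=A_{\mathfrak p}$-modules; with this dictionary in place both implications become essentially formal.

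For $(1)\Rightarrow(2)$: fix an affine open $U=\Spec A$. By Proposition~\ref{pure1}, $\scrF(U)_{\mathfrak p}\to\scrG(U)_{\mathfrak p}$ is a pure monomorphism of $A_{\mathfrak p}$-modules for every $\mathfrak p\in\Spec A$. I would then argue that this already forces purity of $\scrF(U)\to\scrG(U)$ over $A$: given any $A$-module $T$, injectivity of $T\otimes_A\scrF(U)\to T\otimes_A\scrG(U)$ can be checked after localizing at every prime $\mathfrak p$, where it reduces to $T_{\mathfrak p}\otimes_{A_{\mathfrak p}}\scrF(U)_{\mathfrak p}\to T_{\mathfrak p}\otimes_{A_{\mathfrak p}}\scrG(U)_{\mathfrak p}$, and this is injective by the $A_{\mathfrak p}$-purity at $\mathfrak p$.

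For $(2)\Rightarrow(1)$: given $x\in X$, pick an affine open $U=\Spec A$ containing $x$ with associated prime $\mathfrak p$. By Proposition~\ref{pure1} it suffices to prove that $\scrF_x\to\scrG_x$ is pure over $A_{\mathfrak p}$. This is the standard fact that purity is preserved under localization: for any $A_{\mathfrak p}$-module $T$, viewed as an $A$-module via $A\to A_{\mathfrak p}$, one has canonical identifications $T\otimes_A\scrF(U)=T\otimes_{A_{\mathfrak p}}\scrF(U)_{\mathfrak p}$ and similarly for $\scrG(U)$; hence $A$-purity of $\scrF(U)\to\scrG(U)$ immediately gives $A_{\mathfrak p}$-purity of $\scrF(U)_{\mathfrak p}\to\scrG(U)_{\mathfrak p}$, i.e.\ of $\scrF_x\to\scrG_x$.

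The only non-formal ingredient in this outline is the local--global principle for module purity used in $(1)\Rightarrow(2)$, but this is classical and follows directly from the fact that injectivity of a morphism of $A$-modules can be tested at all prime localizations, together with the compatibility of tensor products with localization. With Proposition~\ref{pure1} and the stalk--localization identification in hand, everything else is mere unwinding, so I do not foresee a genuine obstacle.
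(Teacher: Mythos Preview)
Your argument is correct, but it proceeds differently from the paper, particularly in the direction $(1)\Rightarrow(2)$. The paper proves $(1)\Rightarrow(2)$ directly from the definition: given an affine open $U$ and an $\O_X(U)$-module $M$, it tensors $\tau$ with the $\O_X$-module $\imath_*(\widetilde{M})$ (the pushforward along $\imath\colon U\hookrightarrow X$), observes that the resulting sheaf monomorphism restricts on $U$ to $M\otimes_{\O_X(U)}\scrF(U)\to M\otimes_{\O_X(U)}\scrG(U)$ because $\scrF,\scrG$ are quasi-coherent on the affine $U$, and concludes. By contrast, you first pass to stalks via Proposition~\ref{pure1}, then recover purity of $\scrF(U)\to\scrG(U)$ by a local--global argument (injectivity of $T\otimes_A(-)$ checked prime by prime). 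Your route avoids constructing an auxiliary $\O_X$-module and relies only on standard commutative algebra; the paper's route is one step shorter and makes no detour through stalks, but it does lean on the compatibility $(\imath_*(\widetilde{M})\otimes\scrF)(U)\cong M\otimes\scrF(U)$, which needs the quasi-coherence hypothesis. For $(2)\Rightarrow(1)$ the two arguments are closer in spirit: the paper appeals to the identification $(\scrF\otimes\scrG)(U)\cong\scrF(U)\otimes\scrG(U)$ on affines, while you again go through stalks and Proposition~\ref{pure1}; your version is arguably cleaner here, since it sidesteps the question of what $(\scrM\otimes\scrF)(U)$ looks like when $\scrM$ is not quasi-coherent.
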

\begin{proof}
$1.\Rightarrow 2.$ Let  $U$ be an affine open subset of $X$ and $\imath:U\hookrightarrow X$ be the inclusion. And let $M\in \O_X(U)\Mod$. Then $\imath_*(\widetilde{M})$ is an $\O_X$-module. Therefore $$0\to \imath_*(\widetilde{M})\otimes \scrF\to \imath_*(\widetilde{M})\otimes \scrG  $$is exact. But then $$0\to (\imath_*(\widetilde{M})\otimes \scrF)(U)\to (\imath_*(\widetilde{M})\otimes \scrG)(U)  $$ is exact in $\O_X(U)\Mod$, that is, $$0\to \imath_*(\widetilde{M})(U)\otimes \scrF(U)\to \imath_*(\widetilde{M})(U)\otimes \scrG(U)  $$is exact. Since, for each $\O_X(U)$-module $A$, $\imath_*(\widetilde{A})(U)=A$, we get that $0\to M\otimes \scrF(U)\to M\otimes \scrG(U)$ is exact. Thus $0\to \scrF(U)\to \scrG(U)$ is pure.

\medskip\par\noindent
$2.\Rightarrow 1.$ This is immediate just by observing that, for each affine open set $U\subseteq X$, $(\scrF\otimes\scrG)(U)\cong \scrF(U)\otimes \scrG(U)$, and that a morphism $\tau$ in $\O_X\Mod$ is a monomorphism if, and only if, $\tau_U$ is a monomorphism in $\O_X(U)\Mod$.
\end{proof}

\begin{proposition}\label{pure2}
Let $X$ be a scheme and $\scrF, \scrG \in   \Qco(X)$. The following statements are equivalent:
\begin{enumerate}
\item $0\to \scrF\stackrel{\tau}{\to} \scrG$ is pure exact.
\item There exists an open covering of $X$ by affine open sets $\mathscr{U}=\{U_i\}$ such that $0\to \scrF(U_i)\stackrel{\tau_{U_i}}{\longrightarrow} \scrG(U_i)$ is pure in $\O_X(U_i)\Mod$.
\item $0\to \scrF_x\stackrel{\tau_x}{\to} \scrG_x$ is pure in $\O_{X,x}\Mod$, for each $x\in X$.

\end{enumerate}

\end{proposition}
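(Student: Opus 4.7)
The natural plan is to close a small cycle among the three statements, using the two propositions already established. Specifically, Proposition \ref{pure-allafine} tells us that (1) is equivalent to purity of $\tau_U$ for \emph{every} affine open $U$, which in particular forces the existence of an affine covering with the required property, giving (1) $\Rightarrow$ (2) for free. Likewise Proposition \ref{pure1}, stated for arbitrary sheaves of $\O_X$-modules, applies to quasi-coherent sheaves and yields (1) $\Leftrightarrow$ (3) without further work. The only genuinely new content is therefore the implication (2) $\Rightarrow$ (3) (or equivalently (2) $\Rightarrow$ (1)), which amounts to propagating an affine-local, covering-level hypothesis to every stalk.

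For (2) $\Rightarrow$ (3), fix $x\in X$ and choose $i$ with $x\in U_i$. Let $\mathfrak{p}\subseteq \O_X(U_i)$ be the prime corresponding to $x$; since $U_i$ is affine, one has canonical identifications $\scrF_x\cong \scrF(U_i)_{\mathfrak p}$ and $\scrG_x\cong \scrG(U_i)_{\mathfrak p}$ as $\O_{X,x}\cong \O_X(U_i)_{\mathfrak p}$-modules, and $\tau_x$ is identified with the localization of $\tau_{U_i}$ at $\mathfrak p$. The key remark is that purity over a commutative ring localizes: if $0\to L\to M$ is a pure monomorphism of $R$-modules, then for any multiplicative set $S\subseteq R$ and any $R_S$-module $T$, the canonical isomorphism $T\otimes_{R_S}L_S\cong T\otimes_R L$ (and similarly for $M$) together with the purity of $\tau$ over $R$ shows that $T\otimes_{R_S}L_S\to T\otimes_{R_S}M_S$ is injective. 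Applying this to $R=\O_X(U_i)$ and $S=\O_X(U_i)\setminus\mathfrak p$, the hypothesis in (2) yields purity of $\tau_x$ in $\O_{X,x}\Mod$, as desired.

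The implication (3) $\Rightarrow$ (1) is now a direct citation of Proposition \ref{pure1}, and (1) $\Rightarrow$ (2) is obtained by picking any affine open covering and invoking Proposition \ref{pure-allafine}. Combining these three steps closes the cycle $(1)\Rightarrow (2)\Rightarrow (3)\Rightarrow (1)$.

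The only mildly technical point is the localization of purity used in (2) $\Rightarrow$ (3); however this is a standard property of pure sequences of modules over a commutative ring, immediate from the base-change isomorphism $T\otimes_{R_S}(-)_S\cong T\otimes_R(-)$ valid for any $R_S$-module $T$. I do not foresee any further obstacle: the quasi-coherence of $\scrF$ and $\scrG$ is exactly what guarantees that sections over an affine open determine the stalks by localization, so the local-to-global step is clean.
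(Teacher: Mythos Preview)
Your proposal is correct and follows essentially the same route as the paper: the same cycle $(1)\Rightarrow(2)\Rightarrow(3)\Rightarrow(1)$, with $(1)\Rightarrow(2)$ via Proposition~\ref{pure-allafine}, $(3)\Rightarrow(1)$ via Proposition~\ref{pure1}, and $(2)\Rightarrow(3)$ by identifying stalks with localizations of sections over an affine in the covering and using that purity of modules is preserved under localization. If anything, you supply slightly more justification for the localization step than the paper does.
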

\begin{proof}
$1.\Rightarrow 2.$ It follows from proposition \ref{pure-allafine}.

\medskip\par\noindent
$2.\Rightarrow 3.$ Let $x\in X$. Then there exists $U_i\in \mathscr{U}$ such that $x\in U_i=Spec(A_i)$, for some ring $A_i$. But then the claim follows by observing that $\scrF_x=(\widetilde{\scrF(U_i)})_x\cong  \widetilde{\scrF(U_i)_x}$ and noticing that if $0\to M\to N$ is pure exact in $A_i\Mod$, then $0\to M_x\to N_x$ is pure exact in $(A_i)_x\Mod$.

\medskip\par\noindent
$3.\Rightarrow 1.$ By proposition \ref{pure1}, we know that $\tau$ is pure in $\O_X\Mod$.
\end{proof}

\begin{remark}\label{rem}
Note that $\Qco(X)$ is a monoidal category with the tensor product induced  from $\O_X \Mod$. So we could also define a notion of purity in $\Qco(X)$ by using this monoidal structure, that is, $0\rightarrow \scrF \rightarrow \scrG$ is pure exact provided that it is $\scrM \otimes -$ exact, for each $\scrM \in \Qco(X)$. In case $X$ is quasi-separated this notion agrees with the one we have considered (that is, $0\rightarrow \scrF \rightarrow \scrG$ is pure in $\Qco(X)$ if it is pure in $\O_X\Mod$). This is because the direct image functor $\imath_*(\widetilde{M})$ preserves quasi--coherence when $X$ is quasi-separated in the proof of proposition \ref{pure-allafine}.


\end{remark}

Over an affine scheme $X$, the category of quasi-coherent sheaves on $X$ is equivalent to the category $\O_X(X) \Mod$. So the following lemma can be easily obtained.

\begin{lemma}
Let $\scrF \in \Qco(X)$ and $U$ be an affine open subset of $X$. Then $\scrF\mid_U$ is finitely presented in $\Qco(U)$  if and only if $\scrF(U)$ is finitely presented.
\end{lemma}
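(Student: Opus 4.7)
The plan is to exploit the well-known equivalence of categories $\Qco(U) \simeq \O_X(U)\Mod$ on the affine open $U$, given by the global sections functor $\scrG \mapsto \scrG(U)$ with quasi-inverse $M \mapsto \widetilde{M}$. Under this equivalence, $\scrF|_U \in \Qco(U)$ corresponds to the $\O_X(U)$-module $(\scrF|_U)(U) = \scrF(U)$, so the lemma should fall out from the fact that an equivalence of categories preserves the property of being finitely presented.

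First I would record the two ingredients that make the equivalence compatible with the definition of finite presentation. On one hand, for any $\scrG \in \Qco(U)$ one has a natural isomorphism
\[
\Hom_{\Qco(U)}(\scrF|_U,\scrG) \;\cong\; \Hom_{\O_X(U)}(\scrF(U),\scrG(U)),
\]
since both sides compute morphisms in equivalent categories. On the other hand, direct limits in $\Qco(U)$ are computed via the equivalence: for any direct system $(\scrG_i)$ in $\Qco(U)$, taking global sections (which is exact on an affine) gives $(\varinjlim \scrG_i)(U) \cong \varinjlim \scrG_i(U)$ in $\O_X(U)\Mod$.

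Putting these together, for any direct system $(\scrG_i)_{i\in I}$ in $\Qco(U)$ there is a commutative square
\[
\Hom_{\Qco(U)}(\scrF|_U,\varinjlim\scrG_i) \;\cong\; \Hom_{\O_X(U)}(\scrF(U),\varinjlim\scrG_i(U)),
\]
\[
\varinjlim\Hom_{\Qco(U)}(\scrF|_U,\scrG_i) \;\cong\; \varinjlim\Hom_{\O_X(U)}(\scrF(U),\scrG_i(U)),
\]
so the canonical map on the top is an isomorphism exactly when the canonical map on the bottom is. Hence $\scrF|_U$ is finitely presented in $\Qco(U)$ if and only if $\scrF(U)$ is a finitely presented $\O_X(U)$-module.

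There is no real obstacle here; the only point worth being careful about is to justify that the comparison morphism $\varinjlim \Hom(\scrF|_U,\scrG_i) \to \Hom(\scrF|_U,\varinjlim \scrG_i)$ really does correspond, under the two natural isomorphisms above, to the analogous comparison map on the module side. This is a routine naturality check once one observes that the equivalence $\Qco(U)\simeq \O_X(U)\Mod$ sends $\scrF|_U$ to $\scrF(U)$ and is fully faithful and cocontinuous.
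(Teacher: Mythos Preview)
Your proposal is correct and follows essentially the same approach as the paper: the paper simply remarks that over an affine scheme $U$ the category $\Qco(U)$ is equivalent to $\O_X(U)\Mod$ and says the lemma is then easily obtained, without writing out the details. Your argument is just a careful unpacking of this, verifying explicitly that the categorical notion of finite presentation transfers across the equivalence.
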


\begin{proposition}
Assume that $X$ is semi-separated or concentrated.
Let $\scrF \in \Qco(X)$ and consider the following assertions.
\begin{enumerate}
\item $\scrF$ is a finitely presented object in $\Qco(X)$.
\item $\scrF\mid_U$ is finitely presented in $\Qco(U)$ for all affine open subsets $U \subseteq X$.
\item $\scrF_x$ is finitely presented for each $x\in X$.

\end{enumerate}

Then the implications $1\Rightarrow 2\Rightarrow 3$ hold. If $X$ is concentrated, then $1\Leftrightarrow 2$ (\cite[proposition 75]{Mur2}).
\end{proposition}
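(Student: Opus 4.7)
The plan is to prove the two forward implications directly, and to invoke \cite[Proposition 75]{Mur2} for the converse $(2)\Rightarrow(1)$ under the concentrated hypothesis.

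For $(1)\Rightarrow(2)$, fix an affine open $\imath:U\hookrightarrow X$ and work with the adjunction $\imath^*\dashv\imath_*$. The key input is that under either hypothesis, $X$ is quasi-separated; since $U$ is quasi-compact, this makes $\imath$ a quasi-compact and quasi-separated morphism (quasi-separatedness being automatic for any immersion, and quasi-compactness following because intersections of quasi-compact opens in a quasi-separated scheme are quasi-compact). By standard facts (see \cite{GD}), $\imath_*$ then preserves quasi-coherence and commutes with filtered colimits. Given any filtered direct system $\{\scrG_\alpha\}$ in $\Qco(U)$, the chain
\[
\Hom_{\Qco(U)}(\scrF\mid_U,\varinjlim\scrG_\alpha)\cong \Hom_{\Qco(X)}(\scrF,\varinjlim\imath_*\scrG_\alpha)\cong \varinjlim\Hom_{\Qco(U)}(\scrF\mid_U,\scrG_\alpha),
\]
obtained by combining the adjunction, the commutativity of $\imath_*$ with filtered colimits, and the finite presentation of $\scrF$, shows that $\scrF\mid_U$ is finitely presented in $\Qco(U)$.

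For $(2)\Rightarrow(3)$, fix $x\in X$ and choose an affine open $U=\Spec A$ containing $x$. By the preceding lemma, $\scrF(U)$ is a finitely presented $A$-module; since $\scrF_x\cong\scrF(U)_{\mathfrak{p}}$ (where $\mathfrak{p}$ is the prime corresponding to $x$) and localization preserves finite presentation, $\scrF_x$ is finitely presented over $\O_{X,x}$.

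The main technical obstacle lies in $(1)\Rightarrow(2)$: one must verify that $\imath_*$ descends to a right adjoint of $\imath^*$ at the level of quasi-coherent sheaves and that it preserves filtered colimits, which is exactly where the quasi-separatedness of $X$ intervenes (semi-separatedness actually makes $\imath$ affine, giving an even cleaner argument). In contrast, $(2)\Rightarrow(3)$ is a pointwise localization statement that requires no global hypothesis on $X$.
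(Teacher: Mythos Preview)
Your proof is correct and follows essentially the same approach as the paper: for $(1)\Rightarrow(2)$ the paper also uses the adjunction $(\imath^*,\imath_*)$ together with the fact that $\imath_*$ preserves direct limits under the hypotheses on $X$ (organized there as a commutative diagram rather than a chain of isomorphisms), and for $(2)\Rightarrow(3)$ the paper likewise reduces to the fact that localization of a finitely presented module is finitely presented. Your added explanation of why $\imath_*$ preserves quasi-coherence and filtered colimits (via quasi-compactness and quasi-separatedness of $\imath$) is a welcome clarification that the paper leaves implicit.
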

 \begin{proof}
$1. \Rightarrow 2.$ We have to show that the
canonical morphism $$\psi: \varinjlim \Hom (\scrF|_U,\widetilde{B_i})
\rightarrow \Hom(\scrF|_U,\varinjlim \widetilde{B_i})$$ is an isomorphism
for any direct system $\{\widetilde{B_i},\varphi_{ij}\}_I$ of
quasi-coherent $\O_X |_U$-modules. We have the following commutative diagram
$$\xymatrix{\Hom(\scrF|_U, \widetilde{B_i}) \ar[r]\ar[d]& \varinjlim \Hom (\scrF|_U,\widetilde{B_i})\ar[d]\ar[r]^\psi & \Hom(\scrF|_U,\varinjlim \widetilde{B_i})\ar[d]\\
\Hom(\scrF,\imath_*( \widetilde{B_i}))\ar[r] & \varinjlim\Hom(\scrF,\imath_*
(\widetilde{B_i}))\ar[r]^{\psi '}& \Hom(\scrF,\varinjlim \imath_*(
\widetilde{B_i})) }.$$
Columns
are isomorphisms because of the adjoint pair $(\res_U, \imath_*)$. For the third column we also need to observe that, under the hypothesis on $X$, the direct image functor $\imath_*$ preserves direct limits. Since $\scrF$ is finitely presented, the
canonical morphism $\psi '$ is an isomorphism. So $\psi$ is an
isomorphism.


\medskip\par\noindent
$3. \Rightarrow 4.$ $\scrF_x \cong M_p$ for some
finitely presented $R$-module $M$ and prime ideal $p$. Then $4.$ follows because the localization
of a finitely presented $R$-module is a finitely presented $R_p$-module.
\end{proof}

\begin{definition}(cf. \cite[\S 3]{CB})
An exact sequence $0 \rightarrow \scrF \rightarrow \scrG \rightarrow \scrT
\rightarrow 0$ in $\Qco(X)$ is called \emph{categorical pure}  if the functor  $\Hom(\scrH,-)$ leaves the sequence exact for every finitely
presented quasi-coherent $\O_X$-module $\scrH$.
\end{definition}
We shall denote by $\Purefp$ the class of  categorical pure short exact sequences in $\Qco(X)$   and by $\Pure$ the class of pure short exact sequences in $\Qco(X)$, as in proposition \ref{pure2}.
\begin{proposition}\label{pure3}
If $\Qco(X)$ is a locally finitely presented category then categorical pure short exact sequences are pure exact, that is, $\Purefp \subseteq \Pure$.
\end{proposition}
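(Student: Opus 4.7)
The plan is to use Crawley-Boevey's characterization of categorical pure exact sequences in a locally finitely presented additive category (see \cite{CB}): a short exact sequence in $\Qco(X)$ belongs to $\Purefp$ if and only if it can be written as a filtered direct limit (in $\Qco(X)$) of split short exact sequences
\[
0 \to \scrF_i \to \scrG_i \to \scrT_i \to 0
\]
whose terms are finitely presented quasi--coherent sheaves. This reformulation replaces the lifting axiom in the definition of $\Purefp$ by a more pliable colimit description, and filtered colimits of split sequences are preserved by virtually any reasonable functor we might want to apply.

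Given such a presentation, I would verify the geometric purity by appealing to proposition \ref{pure1}, which reduces matters to showing that $0 \to \scrF_x \to \scrG_x \to \scrT_x \to 0$ is pure in $\O_{X,x}\Mod$ for every $x \in X$. The stalk functor $(-)_x \colon \Qco(X) \to \O_{X,x}\Mod$ is exact and commutes with the Crawley-Boevey colimit (once one knows that filtered colimits in $\Qco(X)$ agree with those computed in $\O_X\Mod$, which is the standard situation whenever $\Qco(X)$ is locally finitely presented, e.g.\ for concentrated $X$); being additive, it also preserves splittings. Thus $0 \to \scrF_x \to \scrG_x \to \scrT_x \to 0$ is realized as a filtered colimit of split short exact sequences of $\O_{X,x}$--modules. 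Since tensoring with any $\O_{X,x}$--module preserves both splitness and filtered colimits, such a colimit is automatically pure, and the proof would be complete.

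The main (and rather mild) obstacle is justifying that the filtered colimit supplied by Crawley-Boevey, \emph{a priori} formed in $\Qco(X)$, can be evaluated stalkwise---equivalently, that the inclusion $\Qco(X) \hookrightarrow \O_X\Mod$ preserves filtered colimits. Alternatively, one can bypass stalks entirely and argue directly from definition \ref{pure33}: for any $\scrM \in \O_X\Mod$ the functor $\scrM \otimes -$ is left adjoint to $\ihom(\scrM,-)$, hence preserves filtered colimits, while it sends each split sequence $0 \to \scrF_i \to \scrG_i \to \scrT_i \to 0$ to the split (in particular exact) sequence $0 \to \scrM \otimes \scrF_i \to \scrM \otimes \scrG_i \to \scrM \otimes \scrT_i \to 0$. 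The colimit $0 \to \scrM \otimes \scrF \to \scrM \otimes \scrG \to \scrM \otimes \scrT \to 0$ is therefore exact, giving $\Purefp \subseteq \Pure$.
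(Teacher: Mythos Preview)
Your argument is correct and follows essentially the same route as the paper's: realize the categorical pure sequence as a filtered direct limit of split short exact sequences and then pass to stalks, invoking proposition~\ref{pure2}. The paper constructs this decomposition by hand---writing $\scrH=\varinjlim\scrH_i$ with each $\scrH_i$ finitely presented and pulling back, so that each resulting $\mathbb{E}^i$ is categorical pure with finitely presented cokernel and hence splits---rather than citing Crawley-Boevey, and the concern you raise about colimits is harmless since the inclusion $\Qco(X)\hookrightarrow\O_X\Mod$ admits a right adjoint (the coherator) and therefore preserves all colimits.
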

 \begin{proof}
Let
$\mathbb{E} \equiv 0\rightarrow \scrF \rightarrow \scrG\rightarrow
\scrH\rightarrow 0$
be an exact sequence  in $\Purefp$. By assumption,
$\scrH=\varinjlim \scrH_i$ where $\scrH_i$ is a finitely presented object in
$\Qco(X)$ for each $i$. Now, for each $i$, the top row of the following pullback
diagram,
$$\mathbb{E}^i=\xymatrix{0 \ar[r] & \scrF  \ar[r]\ar[d]& \scrG _i \ar[r]\ar[d]& \scrH_i \ar[d]\ar[r]& 0\\
0\ar[r]& \scrF \ar[r] & \scrG \ar[r]& \scrH\ar[r] & 0}$$ is  a categorical pure exact sequence
ending with a  finitely presented object $\scrH_i$. Therefore, each
$\mathbb{E}^i$ splits for every $i$. That is, $\mathbb{E}=\varinjlim \mathbb{E}^i$ where $\mathbb{E}^i$
is a splitting exact sequence for every $i$. Now taking the stalk at $x\in X$, we get $\mathbb{E}_x=\varinjlim \mathbb{E}^i_x$. Then $\mathbb{E}^i_x$ is pure exact in $\O_{X,x}\Mod$ for each $x\in X$, and so is $\mathbb{E}_x$. Hence, by proposition \ref{pure2}, $\mathbb{E}$ is a pure exact sequence in $\Qco(X)$.
\end{proof}

We recall that a quasi-coherent $\O_X$-module $\scrF$ is \emph{flat} if $\scrF \otimes -$ is exact in $\O_X \Mod$. Equivalently, $\scrF (U)$ is flat as an $\O_X(U)$-module for each affine open subset $U \subseteq X$, or $\scrF_x$ is flat as an $\O_{X,x}$-module for each $x\in X$. We will denote by $\Flat$ the class of all flat quasi-coherent sheaves.
\begin{definition}
A quasi-coherent $\O_X$-module $\scrF$ is called {\it tensor flat} (resp. {\it fp-flat}) if every short exact
sequence in  $\Qco(X)$ ending in $\scrF$ is pure exact (resp. is categorical pure). We shall denote by $\Flatt$ (resp. by $\Flatfp$) the class of all tensor flat quasi-coherent sheaves (resp. the class of all fp-flat quasi-coherent sheaves).

\end{definition}
\begin{proposition}\label{p.}
Let $\scrF\in \Qco(X)$. If $\scrF$ is flat, then  it is also tensor flat. In case $X$ is semi-separated, the converse also holds.
\end{proposition}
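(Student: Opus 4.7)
The plan is to pass to stalks and reduce each direction to the classical module-theoretic fact that a module $F$ over a ring $R$ is flat if and only if every short exact sequence of $R$-modules ending in $F$ is pure.

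For the forward implication, assume $\scrF$ is flat and let $\mathbb{E}\colon 0 \to \scrA \to \scrB \to \scrF \to 0$ be an arbitrary short exact sequence in $\Qco(X)$. Since flatness is a stalk-local condition, $\scrF_x$ is flat over $\O_{X,x}$ for every $x \in X$, so the stalk sequence $\mathbb{E}_x\colon 0 \to \scrA_x \to \scrB_x \to \scrF_x \to 0$ is pure exact in $\O_{X,x}\Mod$. By the equivalence of conditions $(3)$ and $(1)$ in Proposition \ref{pure2}, $\mathbb{E}$ is pure exact, so $\scrF \in \Flatt$.

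For the converse, assume $X$ is semi-separated and $\scrF \in \Flatt$. The strategy is to fit $\scrF$ into a short exact sequence
$$0 \to \scrK \to \scrP \to \scrF \to 0$$
in $\Qco(X)$ with $\scrP$ flat, and then transfer flatness from $\scrP$ to $\scrF$ by means of purity. Under the semi-separated hypothesis, for each affine open immersion $\imath\colon U=\Spec A \hookrightarrow X$ the morphism $\imath$ is affine, which ensures that the pushforward $\imath_*\widetilde{A^{(I)}}$ is a flat quasi-coherent $\O_X$-module for every set $I$. Choosing a finite affine cover of $X$ and taking a direct sum of such pushforwards with indexing sets large enough to surject onto $\scrF$, I obtain a flat quasi-coherent $\scrP$ together with an epimorphism $\scrP \twoheadrightarrow \scrF$; let $\scrK$ denote its kernel.

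By the tensor flatness of $\scrF$, the displayed sequence is pure exact, and Proposition \ref{pure2} gives that $0 \to \scrK_x \to \scrP_x \to \scrF_x \to 0$ is a pure short exact sequence of $\O_{X,x}$-modules with $\scrP_x$ flat. The standard Tor argument (the connecting morphism $\Tor{_1}{\O_{X,x}}{M}{\scrF_x} \to M \otimes \scrK_x$ is injective, while $M \otimes \scrK_x \to M \otimes \scrP_x$ is injective by purity, forcing $\Tor{_1}{\O_{X,x}}{M}{\scrF_x}=0$) then shows $\scrF_x$ is flat for every $x \in X$, whence $\scrF$ is flat. The one non-routine ingredient is the construction of the flat quasi-coherent surjection $\scrP \twoheadrightarrow \scrF$, which is precisely where semi-separatedness enters: without it one cannot in general arrange for pushforwards of flat quasi-coherent sheaves from affine opens to remain flat on $X$.
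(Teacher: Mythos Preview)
Your forward implication is correct and essentially parallel to the paper's: the paper checks purity on affine opens via Proposition~\ref{pure-allafine}, you check it on stalks via Proposition~\ref{pure2}. Either works.

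For the converse there is a genuine gap in the construction of $\scrP$. You assert a surjection $\bigoplus_i \imath_{i,*}\widetilde{A_i^{(I_i)}} \twoheadrightarrow \scrF$, but you never say what the maps $\imath_{i,*}\widetilde{A_i^{(I_i)}} \to \scrF$ are, and in fact there is no natural candidate. The adjunction for an open immersion reads $\Hom(\imath^{*}(-),-)\cong \Hom(-,\imath_{*}(-))$, so it manufactures maps \emph{into} direct images, not out of them; the left adjoint $\imath_{!}$ to $\imath^{*}$ goes the right way but does not preserve quasi-coherence. Separately, ``choosing a finite affine cover'' smuggles in quasi-compactness, which the proposition does not assume. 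It is true that $\Qco(X)$ has enough flats when $X$ is quasi-compact and semi-separated, but that is a nontrivial result and is not delivered by the construction you sketch; for merely semi-separated $X$ it is not clear at all. Once a flat epimorphism is granted, your stalk-wise Tor argument is fine---the problem is getting there.

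The paper's route avoids any global flat resolution. Fix an affine open $\imath\colon U\hookrightarrow X$ and an arbitrary short exact sequence $0\to A\to B\to \scrF(U)\to 0$ of $\O_X(U)$-modules. Since $X$ is semi-separated, $\imath$ is an affine morphism, so $\imath_{*}$ is exact on quasi-coherent sheaves and yields $0\to \imath_{*}\widetilde{A}\to \imath_{*}\widetilde{B}\to \imath_{*}(\widetilde{\scrF|_U})\to 0$ in $\Qco(X)$. Pulling this back along the unit $\scrF\to \imath_{*}(\widetilde{\scrF|_U})$ produces an exact sequence $0\to \imath_{*}\widetilde{A}\to \scrH\to \scrF\to 0$ in $\Qco(X)$; tensor flatness of $\scrF$ makes it pure, and evaluating at $U$ returns the original sequence, now seen to be pure. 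Hence $\scrF(U)$ is flat for every affine $U$. This argument uses semi-separatedness only through the exactness of $\imath_{*}$ and needs no finiteness hypothesis on $X$.
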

\begin{proof}
Let $0\to \scrT\to\scrG\to \scrF\to 0$ be an exact sequence in $\Qco(X)$. Given an affine open $U\subseteq X$, $0\to \scrT(U)\to\scrG(U)\to \scrF(U)\to 0$ is also exact. Since $\scrF(U)$ is a flat $\O_X(U)$-module, we deduce from proposition \ref{pure-allafine} that $\scrF$ is tensor flat.

If $X$ is a semi-separated scheme then the direct image functor $\imath_*$ for the inclusion map $\imath : U \hookrightarrow X$, where $U$ is affine, is exact.

Let $\scrF\in \Qco(X)$ be tensor flat. We need to show that $\scrF(U)$ is a flat $\O_X(U)$-module, for each
affine open subset $U\subseteq X$. Let $$\xymatrix{0\ar[r]& A \ar[r] &
B\ar[r]& \scrF(U)\ar[r] & 0}$$ be an exact sequence of $\O(U)$-modules.
By the previous observation, we have an exact sequence
$$\xymatrix{0\ar[r]& \imath_*(\widetilde{A}) \ar[r] &\imath_*(\widetilde{B}) \ar[r]&
\imath_*(\widetilde{\scrF|_U})\ar[r] & 0}.$$ If we take the pullback of the morphism $\imath_*(\widetilde{B}) \to
\imath_*(\widetilde{\scrF|_U}) $ and the canonical morphism $\scrF \rightarrow
\imath_*(\widetilde{\scrF|_U})$, we get the commutative diagram with exact rows:
$$\xymatrix{0 \ar[r] & \imath_*(\widetilde{A}) \ar[r]\ar[d]& \scrH \ar[r]\ar[d]& \scrF\ar[d]\ar[r]& 0\\
0\ar[r]& \imath_*(\widetilde{A}) \ar[r] &\imath_*(\widetilde{B})
\ar[r]& \imath_*(\widetilde{\scrF|_U})\ar[r] & 0}.$$
Since $\imath_*(\widetilde{A})$ and $\scrF$ are quasi-coherent, $\scrH$ is
quasi-coherent. By assumption, the first row is pure exact, so by proposition \ref{pure-allafine}
each image under affine open subset is pure exact. From this
diagram, it can be deduced that $\scrH(U) \cong
\imath_*(\widetilde{B})(U)=B$. So the short exact sequence  $\xymatrix{0\ar[r]& A \ar[r] &
B\ar[r]& \scrF(U)\ar[r] & 0}$ is pure and then $\scrF(U)$ is a flat $\O_X(U)$-module.
\end{proof}
\begin{corollary}\label{relacion.flatfp.flattensor}
Assume that $\Qco(X)$ is locally finitely presented (for instance if $X$ is concentrated). Then $\Flatfp\subseteq\Flatt$. If $X$ is semi-separated then $\Flatfp\subseteq\Flatt=\Flat$.
\end{corollary}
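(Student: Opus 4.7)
The plan is to derive this corollary directly from the two results that precede it, so the proof should essentially be a two-step unfolding of the definitions.

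First, I would handle the inclusion $\Flatfp\subseteq\Flatt$ under the hypothesis that $\Qco(X)$ is locally finitely presented. Fix $\scrF\in\Flatfp$ and take any short exact sequence $0\to\scrT\to\scrG\to\scrF\to 0$ in $\Qco(X)$. By definition of $\Flatfp$, this sequence belongs to $\Purefp$. Since $\Qco(X)$ is locally finitely presented, Proposition \ref{pure3} applies and gives $\Purefp\subseteq\Pure$, so the sequence is in $\Pure$. As this holds for every such sequence ending in $\scrF$, we conclude $\scrF\in\Flatt$.

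Second, under the additional assumption that $X$ is semi-separated, I would invoke the proposition immediately preceding the corollary, which asserts the equality $\Flatt=\Flat$ in that setting. Combined with the inclusion proved above, this yields $\Flatfp\subseteq\Flatt=\Flat$, completing the statement.

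There is essentially no obstacle here: the corollary is a bookkeeping consequence of Proposition \ref{pure3} and of the characterization of tensor flatness as flatness over semi-separated schemes. The only minor thing to be careful about is the parenthetical remark that concentrated schemes indeed make $\Qco(X)$ locally finitely presented, which was already noted in the introduction (citing \cite[I.6.9.12]{GD} and \cite[Proposition 7]{Garkusha}) and so need not be reproved.
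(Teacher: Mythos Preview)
Your proof is correct and follows exactly the same approach as the paper, which simply records that the corollary follows from Proposition~\ref{pure3} together with the preceding proposition characterizing tensor flatness. You have merely unfolded the definitions that the paper leaves implicit.
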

\begin{proof}
This follows from  proposition \ref{pure3} and proposition \ref{p.}.
\end{proof}
\begin{remark}
The inclusions in corollary \ref{relacion.flatfp.flattensor} are strict. Namely in \cite[corollary 4.6]{ES} it is shown that $\Flatfp=0$ in case $X={\bf P}^n(R)$. In general there is a large class of projective schemes $X$ such that $\Flatfp=0$ in $\Qco(X)$ (see \cite[theorem 4.4]{ES}).

\end{remark}

\section{Pure injective envelopes}
\begin{definition}
A quasi-coherent $\O_X$-module $\scrM$ is said to be {\it fp-pure injective} (resp. {\it pure injective}) if for every short exact sequence $0\to \scrF\to \scrG\to \scrH\to 0$ in $\Purefp$ (resp. in $\Pure$) the sequence
$0\to \Hom(\scrH,\scrM)\to \Hom(\scrG,\scrM)\to \Hom(\scrF,\scrM)\to 0$ is exact. We shall denote by $\Pinjfp$ (resp. by $\Pinj$) the class of all fp-pure injective quasi-coherent sheaves (resp. the class of all pure injective quasi-coherent sheaves). In general, when we say that an $\O_X$-module is pure injective, we mean that it is `injective' with respect to all pure exact sequences in $\O_X\Mod$.

\end{definition}
\begin{remark}
\medskip\par\noindent
\begin{itemize}
\item If $X$ is concentrated then, by proposition \ref{pure3}, $\Pinj\subseteq \Pinjfp$.
\item Clearly, every injective quasi-coherent $\O_X$-module is both fp-pure injective and pure injective.
\end{itemize}
\end{remark}
\begin{theorem}
Let $X$ be a concentrated scheme. Then every $\scrM\in\Qco(X)$ admits an fp-pure injective envelope $\eta:\scrM\to \mathrm{PE}_{fp}(\scrM)$. That is, $\Pinjfp$ is enveloping.

Moreover the induced short exact sequence $$0\to \scrM \stackrel{\eta}{\longrightarrow} \mathrm{PE}_{fp}(\scrM)\longrightarrow \frac{\mathrm{PE}_{fp}(\scrM)}{\scrM}\to 0$$ is in $\Purefp$.
\end{theorem}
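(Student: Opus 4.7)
The plan is to reduce the statement to Herzog's general existence theorem for pure-injective envelopes in locally finitely presented additive categories from \cite{Herzog} (with an alternative account in \cite{Garcia}). Since $X$ is concentrated, $\Qco(X)$ is locally finitely presented by \cite[Proposition 7]{Garkusha}, and the notion $\Purefp$ of categorical purity coincides with Crawley-Boevey's abstract purity in $\Qco(X)$. Hence Herzog's result applies directly and produces both the envelope and the claimed $\Purefp$ property of the resulting short exact sequence.

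The mechanism proceeds via a Yoneda-style embedding
$$
H:\Qco(X)\longrightarrow \mathcal{B},\qquad H(\scrM)=\Hom_{\Qco(X)}(-,\scrM)\big|_{\mathrm{fp}(\Qco(X))},
$$
with target the Grothendieck category $\mathcal{B}$ of additive contravariant functors from $\mathrm{fp}(\Qco(X))$ to $\mathrm{Ab}$. Standard facts (cf.\ \cite{CB}) show that $H$ is fully faithful, preserves direct limits, and that a short exact sequence in $\Qco(X)$ lies in $\Purefp$ if and only if its image under $H$ is short exact in $\mathcal{B}$. In particular, $\scrN\in\Pinjfp$ if and only if $H(\scrN)$ is injective in $\mathcal{B}$.

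With these identifications in hand, one takes the injective envelope $\iota:H(\scrM)\hookrightarrow I$ in $\mathcal{B}$ (which exists since $\mathcal{B}$ is Grothendieck). The key ingredient of Herzog's theorem is that the essential image of $H$ is closed under essential extensions in $\mathcal{B}$, so $I\cong H(\scrN)$ for a unique $\scrN\in \Qco(X)$, which is therefore fp-pure injective. Full faithfulness of $H$ lifts $\iota$ to a morphism $\eta:\scrM\to\scrN$; set $\mathrm{PE}_{fp}(\scrM)=\scrN$. The envelope property of $\iota$ in $\mathcal{B}$ then transfers to the envelope property of $\eta$ in $\Qco(X)$ via the fully faithful correspondence. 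For the moreover clause, the fact that $H(\eta)=\iota$ is monic in $\mathcal{B}$ is precisely the assertion that $0\to\scrM\stackrel{\eta}{\to}\mathrm{PE}_{fp}(\scrM)\to \mathrm{PE}_{fp}(\scrM)/\scrM\to 0$ lies in $\Purefp$.

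The main obstacle is showing that the essential image of $H$ absorbs essential extensions, so that the envelope $I$ takes the form $H(\scrN)$. This is the technical core of Herzog's theorem and depends on a careful analysis of the flatness-type condition cutting out the essential image of $H$, together with the interplay between injectivity and filtered colimits in $\mathcal{B}$. Once this is granted, the remaining translation between $\Qco(X)$ and $\mathcal{B}$ is formal.
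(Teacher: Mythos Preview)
Your proposal is correct and follows essentially the same approach as the paper: both reduce immediately to Herzog's theorem \cite[theorem 6]{Herzog} (alternatively \cite{Garcia}) after noting that $\Qco(X)$ is locally finitely presented Grothendieck when $X$ is concentrated. The paper's proof is in fact just this citation, whereas you additionally sketch the Yoneda-style mechanism behind Herzog's argument; this is helpful exposition but not a different route.
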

\begin{proof}
Since $X$ is concentrated, $\Qco(X)$ is a locally finitely presented Gro\-then\-dieck category. So the result follows from \cite[theorem 6]{Herzog} (see also \cite{Garcia}).
\end{proof}

Now we will recall the definition of an internal $\Hom$ functor in $\Qco(X)$ ($X$ is an arbitrary scheme). The category $\Qco(X)$ is Grothendieck abelian (see \cite[corollary 3.5]{EE} for the existence of a generator for $\Qco(X)$) and the inclusion functor $\Qco(X)\to \O_X\Mod$ has a right adjoint functor $C$ by the Special Adjoint Functor Theorem. This right adjoint functor is known in the literature as the coherator. The internal $\Hom$ functor is thus defined as
$\iHom(\scrF,\scrG)=C\mathscr{H}om(\scrF,\scrG)$, where $\mathscr{H}om(-,-)$ is the usual sheafhom functor. Therefore $\Qco(X)$ is a closed symmetric monoidal category with the usual tensor product and the $\iHom(-,-)$ a bifunctor, and there is a natural isomorphism $$\Hom(\scrF\otimes\scrG,\scrH)\cong \Hom(\scrF,\iHom(\scrG,\scrH)). $$ The unit object of the monoidal structure is given by $\O_X$. Thus one gets a natural equivalence
$\Hom(\O_X,\iHom(-,-))\simeq \Hom(-,-)$ so for each $\scrF,\scrG\in \Qco(X)$, there is a bijection $\Hom(F,G)\cong \iHom(\scrF,\scrG)(X)$.

Now since $\O_X\Mod$ is a Grothendieck category, it has injective envelopes. Let $\Lambda=\{\scrST_i:\ i\in I\}$ be a set of generators for $\O_X \Mod$ (see for example \cite[corollary 6.8]{Swan}). We pick an injective embedding $$\bigoplus_{\Lambda,\scrT} \scrST_i/\scrT\longrightarrow \scrE$$ where $\scrE\in\O_X \Mod$ is  injective and the sum runs also over all  $\O_X$-submodules of each $\scrST_i$. Then it is clear that such $\scrE$ is  an {\it injective cogenerator} for $\O_X \Mod$. This is an injective $\O_X$-module with the property that for every nonzero $\scrG \in \O_X \Mod$ there exists a nonzero morphism $\scrG\to\scrE$. Note that $C(\scrE)$ is an injective cogenerator in $\Qco(X)$. Indeed,  the inclusion functor $\Qco(X) \to \O_X\Mod$ is an exact functor  with right adjoint $C$.

We shall denote by $\scrM^{\vee}$ the {\it character  $\O_X$-module} given by $\scrM^{\vee}=\mathscr{H}om(\scrM,\scrE)$. There is a canonical map $ev:\scrM \to \scrM^{\vee \vee}$.
\begin{proposition}\label{espur}
Given $\scrM\in\O_X \Mod$, the character  $\O_X$-module $\scrM^{\vee}$ is pure injective in $\O_X \Mod$.
\end{proposition}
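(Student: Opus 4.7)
The plan is to adapt the classical module-theoretic argument for $\Hom_\Z(M,\mathbb{Q}/\mathbb{Z})$ being pure injective to the sheaf setting, using the closed symmetric monoidal structure on $\O_X\Mod$ recalled at the beginning of Section 3. The key ingredients will be: (i) the adjunction
\[
\Hom_{\O_X}(\scrN\otimes\scrM,\scrE)\cong \Hom_{\O_X}(\scrN,\mathscr{H}om(\scrM,\scrE))=\Hom_{\O_X}(\scrN,\scrM^{\vee}),
\]
(ii) the injectivity of the chosen cogenerator $\scrE$ in $\O_X\Mod$, and (iii) the definition of pure exactness in terms of exactness after tensoring with an arbitrary $\O_X$-module.

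Concretely, let $\mathbb{E}\equiv 0\to \scrF\to \scrG\to \scrH\to 0$ be a pure exact sequence in $\O_X\Mod$. First I would tensor $\mathbb{E}$ with $\scrM$. Right exactness of $-\otimes\scrM$ yields exactness at $\scrG\otimes\scrM$ and $\scrH\otimes\scrM$, while purity (Definition \ref{pure33}) guarantees that $\scrF\otimes\scrM\to\scrG\otimes\scrM$ is a monomorphism. Hence
\[
0\to \scrF\otimes\scrM\to \scrG\otimes\scrM\to \scrH\otimes\scrM\to 0
\]
is a short exact sequence in $\O_X\Mod$.

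Next I would apply $\Hom_{\O_X}(-,\scrE)$ to this sequence. Since $\scrE$ is injective in $\O_X\Mod$, the resulting sequence
\[
0\to \Hom(\scrH\otimes\scrM,\scrE)\to \Hom(\scrG\otimes\scrM,\scrE)\to \Hom(\scrF\otimes\scrM,\scrE)\to 0
\]
is exact. Invoking the tensor-hom adjunction naturally in the first argument identifies this with
\[
0\to \Hom(\scrH,\scrM^{\vee})\to \Hom(\scrG,\scrM^{\vee})\to \Hom(\scrF,\scrM^{\vee})\to 0,
\]
which is precisely the exactness condition defining pure injectivity of $\scrM^{\vee}$ in $\O_X\Mod$.

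I do not anticipate a genuine obstacle here, as the argument is a routine translation of the well-known module-theoretic proof, and every structural ingredient is already in place for the category $\O_X\Mod$ without any hypothesis on $X$. The only point that deserves care is the step from purity to full short exact sequence after tensoring: purity is stated as preservation of the monomorphism $\scrF\otimes\scrM\hookrightarrow\scrG\otimes\scrM$, and one must combine this with the (unconditional) right exactness of $-\otimes\scrM$ in order to feed an honest short exact sequence into $\Hom(-,\scrE)$ and exploit the injectivity of $\scrE$.
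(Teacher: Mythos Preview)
Your proof is correct and follows essentially the same route as the paper's: both use the tensor--hom adjunction to rewrite $\Hom(-,\scrM^{\vee})$ as $\Hom(-\otimes\scrM,\scrE)$, then invoke purity of the given sequence together with injectivity of $\scrE$. The only cosmetic difference is that the paper checks just the surjectivity of $\Hom(\scrG,\scrM^{\vee})\to\Hom(\scrF,\scrM^{\vee})$ (which already suffices for pure injectivity), whereas you verify exactness of the full $\Hom$-sequence; your extra care about combining purity with right exactness of $-\otimes\scrM$ is fine but not strictly needed.
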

\begin{proof}
Let $0\to \scrT\to \scrN\to \scrH\to 0$ be a pure exact sequence in $\O_X \Mod$. Then $$\Hom(\scrN,\scrM^\vee)\to \Hom(\scrT,\scrM^\vee)\to 0$$ is exact if and only if $$\Hom(\scrN\otimes\scrM,\scrE)\to \Hom(\scrT\otimes\scrM,\scrE)\to 0$$ is exact. But the latter follows since $0\to \scrT\otimes\scrM\to \scrN\otimes\scrM$ is exact and $\scrE$ is an injective cogenerator.
\end{proof}

\begin{proposition}\label{esci}
A short exact sequence in $\O_X \Mod$,
$$0\to \scrF\longrightarrow \scrG\longrightarrow \scrT\to 0$$ is pure exact if and only if $$0\to \scrT^\vee\longrightarrow \scrG^\vee \longrightarrow \scrF^\vee\to 0$$ splits.
\end{proposition}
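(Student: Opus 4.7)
The plan is to adapt the classical Lambek characterization of pure exactness via character duals, working out the argument directly in the closed symmetric monoidal category $\O_X\Mod$. The two ingredients I will rely on are: the tensor--hom adjunction $\Hom(\scrA\otimes\scrB,\scrE)\cong\Hom(\scrA,\mathscr{H}om(\scrB,\scrE))=\Hom(\scrA,\scrB^\vee)$, and the exactness of the functor $(-)^\vee=\mathscr{H}om(-,\scrE)$. The latter holds because the restriction functor to any open $U$ has the exact left adjoint $j_!$ (extension by zero), so it preserves injectives; hence $\scrE|_U$ is injective in $\O_U\Mod$ and $\mathscr{H}om(-,\scrE)(U)=\Hom_{\O_U}(-|_U,\scrE|_U)$ is exact. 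In particular, whenever the original sequence is exact, the ``dual'' sequence $0\to\scrT^\vee\to\scrG^\vee\to\scrF^\vee\to 0$ is automatically exact, so all that is at stake is whether it splits.

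For the forward direction, assume $0\to\scrF\to\scrG\to\scrT\to 0$ is pure. Splitting of the dual amounts to producing a section of $\scrG^\vee\twoheadrightarrow\scrF^\vee$, equivalently to lifting $\id_{\scrF^\vee}$ along this surjection, equivalently to the surjectivity of
\[
\Hom(\scrF^\vee,\scrG^\vee)\longrightarrow\Hom(\scrF^\vee,\scrF^\vee).
\]
Via the tensor--hom adjunction this map is identified with the map $\Hom(\scrF^\vee\otimes\scrG,\scrE)\to\Hom(\scrF^\vee\otimes\scrF,\scrE)$ induced by $\scrF^\vee\otimes\scrF\to\scrF^\vee\otimes\scrG$. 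Applying the purity hypothesis with $\scrM=\scrF^\vee$ shows that the latter map is a monomorphism, and since $\scrE$ is injective its $\Hom(-,\scrE)$-dual is surjective. The preimage of $\id_{\scrF^\vee}$ yields the desired splitting.

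For the converse, suppose the dual sequence splits. Then for every $\scrM\in\O_X\Mod$, applying $\Hom(\scrM,-)$ preserves the split exactness, so $\Hom(\scrM,\scrG^\vee)\to\Hom(\scrM,\scrF^\vee)$ is surjective. Translating once more through the adjunction, this is precisely surjectivity of $\Hom(\scrM\otimes\scrG,\scrE)\to\Hom(\scrM\otimes\scrF,\scrE)$, coming from precomposition with $\scrM\otimes\scrF\to\scrM\otimes\scrG$. To conclude that this last map is a monomorphism, let $\scrK$ be its kernel. Any morphism $\scrK\to\scrE$ extends to $\scrM\otimes\scrF\to\scrE$ by injectivity of $\scrE$, and by our surjectivity this extension factors through $\scrM\otimes\scrG$; hence it vanishes on $\scrK$. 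Thus $\Hom(\scrK,\scrE)=0$, and since $\scrE$ is a cogenerator $\scrK=0$. As $\scrM$ was arbitrary, the original sequence is pure.

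The main conceptual point to verify carefully is that the tensor--hom adjunction really does turn the precomposition maps on one side into precomposition maps with $\id\otimes(-)$ on the other; this naturality is what makes both directions work. The only genuine technical check beyond that is the exactness of $(-)^\vee$, which boils down to the preservation of injectives under restriction to opens. Everything else is then formal diagram chasing.
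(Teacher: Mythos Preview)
Your argument is correct and is precisely the classical Lambek-style proof the paper has in mind; the paper merely cites the module-theoretic version (\cite[proposition 5.3.8]{EdO}) and records the one fact needed for the transfer, namely that an injective cogenerator detects exactness. You have simply spelled out those details explicitly in the closed monoidal category $\O_X\Mod$, including the pleasant observation that exactness of $(-)^\vee$ follows from restriction preserving injectives via the $j_!$-adjunction.
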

\begin{proof}
The proof is the same as that in categories of modules (see for example \cite[proposition 5.3.8]{EdO}). It is necessary to point out that in any Grothendieck category $\mathcal C$, with an injective cogenerator $E$, a sequence $0\to M\to L\to N\to 0$ is exact if, and only if, $0\to \Hom(N,E)\to \Hom(L,E)\to \Hom(M,E)\to 0$ is exact.
\end{proof}

\begin{corollary}\label{preen}
For any $\scrM\in \O_X \Mod$ the evaluation map $ev:\scrM\to \scrM^{\vee\vee}$ is a pure monomorphism. 
\end{corollary}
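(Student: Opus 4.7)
The plan is to mimic the well-known module-theoretic argument, exploiting the closed symmetric monoidal structure on $\O_X\Mod$ together with propositions \ref{espur} and \ref{esci}. Two things must be established: that $ev$ is a monomorphism, and that the resulting short exact sequence $0\to \scrM\to \scrM^{\vee\vee}\to \Coker(ev)\to 0$ is pure exact in $\O_X\Mod$.

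For the monomorphism step, let $\scrK=\Ker(ev)$. If $\scrK$ were nonzero, then since $\scrE$ is an injective cogenerator there would exist a nonzero morphism $\varphi:\scrK\to \scrE$; by injectivity of $\scrE$, $\varphi$ extends to $\tilde\varphi:\scrM\to \scrE$, i.e.\ to a global section of $\scrM^{\vee}$. Picking a local section $k\in \scrK(U)$ with $\tilde\varphi(k)\neq 0$, the element $ev(k)\in \scrM^{\vee\vee}(U)$ sends $\tilde\varphi|_U$ to $\tilde\varphi(k)\neq 0$, contradicting $ev|_{\scrK}=0$. Hence $\scrK=0$.

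For the purity step, proposition \ref{esci} reduces the claim to showing that the dualised short exact sequence $0\to \Coker(ev)^{\vee}\to \scrM^{\vee\vee\vee}\stackrel{(ev)^{\vee}}{\longrightarrow} \scrM^{\vee}\to 0$ splits; note that its exactness is automatic from injectivity of $\scrE$. A splitting is furnished by the evaluation morphism $ev_{\scrM^{\vee}}:\scrM^{\vee}\to (\scrM^{\vee})^{\vee\vee}=\scrM^{\vee\vee\vee}$, by means of the triangle identity $(ev_{\scrM})^{\vee}\circ ev_{\scrM^{\vee}}=id_{\scrM^{\vee}}$. This identity is the standard zig-zag relation arising from the tensor--Hom adjunction $\Hom(\scrF,\scrG^{\vee})\cong \Hom(\scrF\otimes\scrG,\scrE)\cong \Hom(\scrG,\scrF^{\vee})$, and can be unwound directly by chasing a local section $\varphi\in \scrM^{\vee}(U)$ through both maps: $ev_{\scrM^{\vee}}(\varphi)$ is the morphism $\psi\mapsto \psi(\varphi)$, and precomposing with $ev_{\scrM}$ yields $m\mapsto ev_{\scrM}(m)(\varphi)=\varphi(m)$, i.e.\ $\varphi$ again.

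The main potential obstacle is verifying this triangle identity rigorously in the internal-hom setting for sheaves of $\O_X$-modules; however, once phrased in terms of the closed symmetric monoidal structure on $\O_X\Mod$ it reduces to a formal diagram chase valid in any closed symmetric monoidal category with a chosen injective cogenerator. With the identity in hand, the splitting is automatic and the purity of $0\to \scrM\to \scrM^{\vee\vee}\to \Coker(ev)\to 0$ follows immediately from proposition \ref{esci}, completing the proof.
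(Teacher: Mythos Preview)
Your proposal is correct and follows essentially the same route as the paper's proof: both establish injectivity of $ev$ by using that $\scrE$ is an injective cogenerator to produce a morphism $\scrM\to\scrE$ not vanishing on a chosen nonzero local section, and both deduce purity via proposition~\ref{esci} by observing that $ev_{\scrM^{\vee}}:\scrM^{\vee}\to\scrM^{\vee\vee\vee}$ is a section of $(ev_{\scrM})^{\vee}$. Your write-up is simply more explicit about the triangle identity, which the paper asserts without justification.
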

\begin{proof}
First we will see that $ev$ is injective. Let $0\neq x\in \scrM(U)$ for some affine open $U$. Then there exists a nonzero $\O_X$-module $\scrST/\scrT\subseteq \scrM$, where $\scrST\in \Lambda$, with $x\in\scrST/\scrT$. By the definition of $\scrE$, there is a monomorphism $\alpha:\scrST/\scrT\to \scrE$ with $\alpha(x)\neq 0$. Then $\alpha$ extends to $\alpha':\scrM\to \scrE$. And $ev(x)(\alpha')=\alpha'(x)\neq 0$. So we are done. To show that $ev:\scrM\to \scrM^{\vee \vee}$ is pure exact we need to show, by proposition \ref{esci}, that $\scrM^{\vee \vee \vee}\to \scrM^{\vee}$ admits a section, but $ev^{\vee}:\scrM^{\vee}\to \scrM^{\vee\vee\vee}$ is a such section.
\end{proof}

\begin{lemma}\label{preen1}
Let $\scrM$ be a pure-injective  $\O_X$-module. Then its coherator $C(\scrM)$ is pure injective in $\Qco(X)$, as well.
\end{lemma}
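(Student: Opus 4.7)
The plan is to exploit the adjunction between the inclusion $\Qco(X) \hookrightarrow \O_X\Mod$ and the coherator $C$, combined with the fact that pure exact sequences in $\Qco(X)$ are, by definition, pure exact already in $\O_X\Mod$ (both uses the same tensor product with arbitrary $\O_X$-modules, see Definition \ref{pure33} and the definition preceding the lemma). This reduces the statement to a purely formal manipulation.

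More precisely, first I would take an arbitrary pure exact sequence
$$ 0 \to \scrF \to \scrG \to \scrH \to 0 $$
in $\Qco(X)$; it lives inside $\O_X\Mod$ and, since purity is tested against tensor products with objects of $\O_X\Mod$ in both settings, it is pure exact in $\O_X\Mod$ as well. Then, applying the pure injectivity of $\scrM$ in $\O_X\Mod$, the sequence
$$ 0 \to \Hom_{\O_X\Mod}(\scrH,\scrM) \to \Hom_{\O_X\Mod}(\scrG,\scrM) \to \Hom_{\O_X\Mod}(\scrF,\scrM) \to 0 $$
is exact.

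Next I would invoke the adjunction $\Hom_{\Qco(X)}(-, C(\scrM)) \cong \Hom_{\O_X\Mod}(-, \scrM)$, natural in the first variable in $\Qco(X)$, to transport the exactness above to the sequence
$$ 0 \to \Hom_{\Qco(X)}(\scrH, C(\scrM)) \to \Hom_{\Qco(X)}(\scrG, C(\scrM)) \to \Hom_{\Qco(X)}(\scrF, C(\scrM)) \to 0, $$
which expresses precisely that $C(\scrM)$ is pure injective in $\Qco(X)$.

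The argument is essentially formal, so I do not anticipate a real obstacle; the only point that requires a brief justification is that the notion of pure exactness is the same whether one views the sequence in $\Qco(X)$ or in $\O_X\Mod$, which is immediate from the common definition in terms of tensoring with arbitrary $\O_X$-modules.
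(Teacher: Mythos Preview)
Your proposal is correct and follows essentially the same approach as the paper: both argue that a pure exact sequence in $\Qco(X)$ is pure exact in $\O_X\Mod$, apply the pure injectivity of $\scrM$ there, and then use the adjunction $(\iota,C)$ to transfer the exactness of $\Hom$-groups back to $\Qco(X)$. The only cosmetic difference is that the paper works with the pure monomorphism $0\to\scrF\to\scrG$ rather than the full short exact sequence, which amounts to the same thing.
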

\begin{proof}
Let $0 \rightarrow \scrF \rightarrow \scrG$ be a pure exact sequence in $\Qco(X)$. This means that it is pure exact in $\O_X\Mod$. So we have an exact sequence
$$\Hom_{\O_X\Mod}(\scrG, \scrM) \rightarrow \Hom_{\O_X\Mod}(\scrF,\scrM)\rightarrow 0.$$
Since $(\iota, C)$ is an adjoint pair where $\iota: \Qco(X) \hookrightarrow \O_X \Mod$, this implies that
$$\Hom_{\Qco(X)}(\scrG, C(\scrM)) \rightarrow \Hom_{\Qco(X)}(\scrF,C(\scrM))\rightarrow 0$$
is exact.
\end{proof}

\begin{corollary}\label{preen2}
Every quasi-coherent sheaf $\scrM$ can be purely embedded into a pure injective quasi-coherent  sheaf. In particular, the class of pure injective quasi-coherent sheaves is preenveloping.
\end{corollary}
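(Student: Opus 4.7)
The plan is to build a pure embedding of $\scrM$ into a pure-injective quasi-coherent sheaf by combining the double-character embedding of Corollary \ref{preen} with the coherator functor of Lemma \ref{preen1}, and then deduce the preenveloping property from the standard fact that a pure monomorphism into a pure-injective object is automatically a preenvelope with respect to the pure-injective class.

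Concretely, given $\scrM\in\Qco(X)$, I would begin with the evaluation map $ev:\scrM\to \scrM^{\vee\vee}$ in $\O_X\Mod$, which is a pure monomorphism by Corollary \ref{preen}. Since $\scrM^{\vee}$ is pure injective in $\O_X\Mod$ by Proposition \ref{espur}, applying $(-)^{\vee}$ once more shows that $\scrM^{\vee\vee}=(\scrM^{\vee})^{\vee}$ is pure injective in $\O_X\Mod$ as well. By Lemma \ref{preen1}, $C(\scrM^{\vee\vee})$ is then pure injective in $\Qco(X)$. Using the adjunction $(\iota,C)$ with $\iota$ fully faithful, the map $ev$ corresponds to a morphism $\widetilde{ev}:\scrM\to C(\scrM^{\vee\vee})$ in $\Qco(X)$ fitting into a factorization
\[
\scrM\xrightarrow{\widetilde{ev}} C(\scrM^{\vee\vee})\xrightarrow{\varepsilon} \scrM^{\vee\vee},
\]
whose composition is $ev$. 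Since for every $\scrF\in\O_X\Mod$ the map $\scrF\otimes ev$ is a monomorphism, the factored map $\scrF\otimes \widetilde{ev}$ must also be a monomorphism; hence $\widetilde{ev}$ is a pure monomorphism in $\O_X\Mod$, which by definition of purity on $\Qco(X)$ means $\widetilde{ev}$ is a pure monomorphism in $\Qco(X)$.

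For the preenveloping statement, I would argue that any pure monomorphism $\eta:\scrM\to\scrH$ into a pure injective $\scrH$ is automatically a $\Pinj$-preenvelope: given any morphism $\scrM\to\scrH'$ with $\scrH'\in\Pinj$, pure injectivity of $\scrH'$ applied to the pure exact sequence $0\to\scrM\to\scrH\to\scrH/\scrM\to 0$ yields the required extension to $\scrH$.

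The only subtle point I anticipate is the step where I transfer the pure monomorphism from $\O_X\Mod$ across the adjunction to obtain a pure monomorphism in $\Qco(X)$; here the factorization through the counit $\varepsilon$ is essential, because one has no direct compatibility of the coherator with arbitrary tensor products. Everything else is a straightforward chase through the results already established in this section.
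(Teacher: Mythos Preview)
Your proposal is correct and follows essentially the same route as the paper: both use the pure monomorphism $ev:\scrM\to\scrM^{\vee\vee}$ from Corollary~\ref{preen}, apply the coherator to obtain a pure injective quasi-coherent target via Lemma~\ref{preen1}, and factor $ev$ through the counit of the adjunction $(\iota,C)$. Your version is a bit more explicit than the paper's in justifying why the factored map $\widetilde{ev}$ remains a pure monomorphism (via the ``first factor of a monomorphism is a monomorphism'' argument applied to each $\scrF\otimes-$) and in spelling out the preenveloping conclusion, but there is no substantive difference in strategy.
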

\begin{proof}
Let $\scrM$ be a quasi-coherent sheaf. By corollary \ref{preen}, there is a pure monomorphism $ev: \scrM \rightarrow \scrM^{\vee \vee}$, where $\scrM^{\vee \vee}$ is  a pure injective $\O_X$-module. So we apply the coherator functor on $\scrM^{\vee \vee}$, $C(\scrM^{\vee \vee})$. By lemma \ref{preen1}, it is a  pure injective quasi-coherent sheaf. The adjoint pair $(\iota,C)$  allows to factorize $ev$ over $C(\scrM^{\vee \vee})$. Indeed, $\Qco(X)$ is a coreflective subcategory of $\O_X \Mod$  and $\scrM$ is quasi-coherent. So there is a unique morphism $\varphi:\scrM \rightarrow C(\scrM^{\vee \vee})$ over which $ev$ is factorized. Then $\varphi$ is a pure monomorphism, as well.
\end{proof}

In order to show that the class $\Pure$ in $\Qco(X)$ is enveloping,  we will apply \cite[theorem 2.3.8]{Xu} (this, in turn, uses \cite[theorem 2.2.6]{Xu}). The arguments in these proofs are categorical and can be easily extended to our setup in $\Qco(X)$ by taking into account the following lemma:
\begin{lemma}\label{lema1}
For a given $\scrM\in \Qco(X)$, the class of sequences in $\Pure$ of the form $$0\to \scrM\to \mathscr{L}\to \scrT\to 0,$$ varying $\mathscr{L},\scrT\in \Qco(X)$ is closed under direct limits.
\end{lemma}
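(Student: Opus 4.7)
The plan is to reduce the statement to the analogous, well-known fact in module categories by passing to stalks, using Proposition \ref{pure2}.

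First I would take a direct system $\{\mathbb{E}^i\}_{i\in I}$ of short exact sequences in $\Pure$ of the shape
\[
\mathbb{E}^i \equiv 0\to \scrM \to \mathscr{L}_i \to \scrT_i \to 0,
\]
where the transition morphisms restrict to the identity on the fixed leftmost term $\scrM$. Because $\Qco(X)$ is a Grothendieck category, direct limits are exact, so the colimit sequence
\[
\varinjlim \mathbb{E}^i \equiv 0\to \scrM \to \varinjlim \mathscr{L}_i \to \varinjlim \scrT_i \to 0
\]
is again short exact in $\Qco(X)$, with the same leftmost term $\scrM$ (since the constant subsystem at $\scrM$ with identity transitions has colimit $\scrM$).

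Next I would check that $\varinjlim \mathbb{E}^i$ lies in $\Pure$. By Proposition \ref{pure2}, it suffices to verify that for each $x\in X$ the induced sequence of stalks is pure exact in $\O_{X,x}\Mod$. Taking stalks is itself a filtered colimit and hence commutes with the direct limit, giving
\[
(\varinjlim \mathbb{E}^i)_x \;\cong\; \varinjlim \bigl( \mathbb{E}^i_x \bigr)
\]
in $\O_{X,x}\Mod$. Each $\mathbb{E}^i_x$ is pure exact in $\O_{X,x}\Mod$ by the hypothesis $\mathbb{E}^i\in\Pure$ combined with Proposition \ref{pure2}.

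At this point the problem is reduced to the classical statement that, in a module category over a ring, the class of pure exact sequences is closed under direct limits; this is a standard application of the fact that a short exact sequence of modules is pure exact if and only if it is a direct limit of split exact sequences, together with the commutativity of tensor products with direct limits. Applying this to $\O_{X,x}\Mod$ at each $x\in X$ shows that $(\varinjlim \mathbb{E}^i)_x$ is pure, whence $\varinjlim \mathbb{E}^i\in\Pure$ by Proposition \ref{pure2} once more. The main point to be careful about is the interchange of stalks with direct limits and the fact that the leftmost term of the colimit sequence is genuinely $\scrM$, both of which follow from the shape of the system; the purity inheritance itself is then an immediate consequence of the affine/stalkwise case.
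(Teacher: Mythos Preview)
Your argument is correct and follows exactly the route the paper indicates: reduce to stalks via Proposition~\ref{pure2} and invoke the corresponding module-theoretic fact (the paper cites \cite[proposition 2.3.7]{Xu}). You have simply spelled out the details of what the paper summarizes in one sentence.
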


\begin{proof}
The argument is local and so it can be deduced from the corresponding result on module categories (see for example \cite[proposition 2.3.7]{Xu}).
\end{proof}


Combining lemma \ref{lema1} and corollary \ref{preen2} and applying the analogue to \cite[theorem 2.3.8]{Xu} for the category $\Qco(X)$, we get
\begin{theorem}\label{tensor-pure.injenvelope}
Every $\scrM\in\Qco(X)$ admits a pure injective envelope $\eta:\scrM\to \mathrm{PE}(\scrM)$. That is, $\Pinj$ is enveloping.

Moreover the induced short exact sequence $$0\to \scrM \stackrel{\eta}{\longrightarrow} \mathrm{PE}(\scrM)\longrightarrow \frac{\mathrm{PE}(\scrM)}{\scrM}\to 0$$ is in $\Pure$.
\end{theorem}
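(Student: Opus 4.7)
The plan is to invoke the general categorical mechanism of \cite[theorem 2.3.8]{Xu}, which upgrades a preenveloping class to an enveloping one provided one has closure of the appropriate class of short exact sequences under direct limits. The two hypotheses needed are already packaged in the results just proved: corollary \ref{preen2} supplies the preenveloping half (every $\scrM\in\Qco(X)$ fits into a pure monomorphism into a pure injective quasi-coherent sheaf), and lemma \ref{lema1} supplies the direct-limit closure of the class of pure short exact sequences $0\to\scrM\to\mathscr{L}\to\scrT\to 0$ starting at a fixed $\scrM$.

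First I would recall Xu's argument in broad strokes: fix $\scrM\in\Qco(X)$ and consider the (essentially small, after a cardinality bound) category of pure monomorphisms $\scrM\to\scrN$ with $\scrN\in\Pinj$. Using corollary \ref{preen2} to witness preenveloping, one builds a transfinite tower of such pure monomorphisms and, at limit stages, passes to direct limits. Lemma \ref{lema1} guarantees that these direct limits remain pure monomorphisms with pure injective target (pure injectivity of the limit is preserved by the standard retraction argument, since every test sequence factors through a bounded stage). A cardinality/stabilization argument then shows the tower stabilizes to a map $\eta:\scrM\to\mathrm{PE}(\scrM)$ with the required envelope property: any endomorphism $f:\mathrm{PE}(\scrM)\to\mathrm{PE}(\scrM)$ with $f\circ\eta=\eta$ is an automorphism.

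Once the envelope exists, the furnishing of the short exact sequence in $\Pure$ is immediate: $\eta$ arises as a direct limit (or a retract) of pure monomorphisms produced by corollary \ref{preen2}, so $\eta$ itself is a pure monomorphism. By proposition \ref{pure2}, this is equivalent to $\eta_x$ being pure at every stalk, a condition preserved by the construction. Hence the cokernel sequence
\[
0\to \scrM \xrightarrow{\eta} \mathrm{PE}(\scrM)\to \mathrm{PE}(\scrM)/\scrM\to 0
\]
lies in $\Pure$.

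The main obstacle is simply verifying that Xu's category-theoretic proof, written originally for module categories, transfers verbatim to $\Qco(X)$. The only facts used are that $\Qco(X)$ is a Grothendieck category, that $\Pinj$ is closed under direct products (clear from the definition of purity via the exactness of $\Hom(-,\scrM)$ on pure sequences), and the two specific inputs above; none of these require conditions on $X$. This is why the theorem holds for \emph{arbitrary} schemes, in contrast to the fp-version which needed $X$ concentrated.
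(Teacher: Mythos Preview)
Your high-level strategy is exactly the paper's: feed corollary \ref{preen2} (pure injective preenvelopes exist and are pure monomorphisms) and lemma \ref{lema1} (the class of pure short exact sequences under a fixed $\scrM$ is closed under direct limits) into the categorical analogue of \cite[theorem 2.3.8]{Xu}. At that level, there is nothing to add.

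However, your sketch of how Xu's argument runs contains a genuine error. You assert that at limit stages the direct limit of the tower has pure injective target, justified by ``every test sequence factors through a bounded stage.'' This is false: direct limits of pure injective objects are \emph{not} pure injective in general. Already in $\mathbb{Z}\Mod$ an infinite direct sum of nonzero pure injective (= algebraically compact) abelian groups is almost never pure injective; there is no finiteness of the test objects to exploit here, since tensor-purity is not detected by a set of finitely presented objects in the way fp-purity is. Xu's actual mechanism is different: after passing to the direct limit one only knows, via lemma \ref{lema1}, that the map from $\scrM$ remains a pure monomorphism; one then \emph{re-applies} the preenveloping step (corollary \ref{preen2}) to embed the limit purely into a fresh pure injective, and a cardinality bound on pure-essential extensions forces the process to stabilize. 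Your claim that $\Pinj$ is closed under products is correct and is one of the ingredients that makes the cardinality argument go through, but it does not give closure under directed colimits.

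So the citation structure of your proof is right and matches the paper, but if you intend to unpack Xu's argument rather than merely cite it, the limit-stage step needs to be rewritten as ``take the direct limit, observe the resulting map is still a pure monomorphism by lemma \ref{lema1}, then compose with a new pure injective preenvelope,'' not ``the direct limit target is itself pure injective.''
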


\section{Locally absolutely pure quasi-coherent shea\-ves and absolutely pure sheaves}

An $R$-module $A$ is \emph{absolutely pure} (\cite{Maddox}) if it is pure in every module containing it as a submodule. Absolutely pure modules are also studied with the terminology of FP-injectives (\cite{Sfp}). It follows immediately from the definition that $A$ is absolutely pure if, and only if, it is a pure submodule of some injective module. And therefore $A$ is absolutely pure if, and only if, $\Ext^1_R(M,A)=0$ for each finitely presented $R$-module $M$.

In this section we will study (locally) absolutely pure sheaves in both $\O_X\Mod$ and in $\Qco(X)$. Since we have pure exact sequences in categories of sheaves rather than categorical ones, we deal with tensor-purity to define absolutely pure sheaves in $\O_X\Mod$ and in $\Qco(X)$.

\begin{definition}
Let $(X,\O_X)$ be a scheme.
\begin{enumerate}
\item Let $\scrF$ be in $\O_X \Mod$. $\scrF$ is \emph{absolutely pure} in $\O_X \Mod$ if  every exact sequence $0 \rightarrow \scrF \rightarrow \scrG$ in $\O_X \Mod$ is pure exact in $\O_X \Mod$.
\item Let $\scrF$ be a quasi-coherent sheaf on $X$. $\scrF$ is called \emph{absolutely pure} in $\Qco(X)$ if every exact sequence $0 \rightarrow \scrF \rightarrow \scrG$ in $\Qco(X)$ is pure exact.
\item Let $\scrF$ be a quasi-coherent sheaf on $X$. $\scrF$ is called \emph{locally absolutely pure} if $\scrF(U)$ is absolutely pure over $\O_X(U)$ for every affine open $U \subseteq X$.

\end{enumerate}
\end{definition}
\begin{lemma}
All these notions of locally absolutely purity of quasi-coherent sheaves
and  absolutely purity in $\O_X \Mod$ and in $\Qco(X)$ are closed under
taking pure subobjects.
\end{lemma}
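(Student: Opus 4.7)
The plan is to give a single pushout argument that handles the absolutely pure cases in $\O_X\Mod$, in $\Qco(X)$, and in $\O_X(U)\Mod$ for an affine open $U$ uniformly, and then to deduce the locally absolutely pure statement from the module version via proposition \ref{pure-allafine}.

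Fix one of these ambient abelian categories $\mathcal{A}$ with its tensor product. Suppose $\scrF\in\mathcal{A}$ is absolutely pure and $\scrG\hookrightarrow\scrF$ is a pure monomorphism. Given an arbitrary embedding $\iota:\scrG\hookrightarrow\scrH$ in $\mathcal{A}$, form the pushout
$$\xymatrix{\scrG\ar@{^{(}->}[r]\ar@{^{(}->}[d]_{\iota} & \scrF\ar[d]^{\alpha}\\ \scrH\ar[r]^{\beta} & \scrD.}$$
In any abelian category, the pushout of a monomorphism along an arbitrary morphism is a monomorphism, so both $\alpha$ and $\beta$ are injective. Because $\scrF$ is absolutely pure in $\mathcal{A}$, the monomorphism $\alpha:\scrF\to\scrD$ is pure.

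Now for any test object $\scrM$, I tensor the square with $\scrM$. The top arrow becomes injective by purity of $\scrG\hookrightarrow\scrF$, and the right arrow becomes injective by purity of $\alpha$. Hence the composite $\scrM\otimes\scrG\to\scrM\otimes\scrF\to\scrM\otimes\scrD$ is injective, and commutativity of the square forces the map $\scrM\otimes\scrG\to\scrM\otimes\scrH$ (followed by $\scrM\otimes\beta$) to equal that injective composite, so $\scrM\otimes\scrG\to\scrM\otimes\scrH$ itself is injective. Thus $\iota$ is pure, and $\scrG$ is absolutely pure in $\mathcal{A}$. Taking $\mathcal{A}=\O_X\Mod$ and $\mathcal{A}=\Qco(X)$ settles those two cases. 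For the locally absolutely pure statement, proposition \ref{pure-allafine} tells us that a pure monomorphism $\scrG\hookrightarrow\scrF$ in $\Qco(X)$ restricts to a pure monomorphism $\scrG(U)\hookrightarrow\scrF(U)$ over $\O_X(U)$ for each affine open $U$; applying the pushout argument in $\mathcal{A}=\O_X(U)\Mod$ shows $\scrG(U)$ is absolutely pure for every such $U$, i.e.\ $\scrG$ is locally absolutely pure.

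The only point that requires a second look is that in the $\Qco(X)$ case the pushout must be formed there while the test objects $\scrM$ range over all of $\O_X\Mod$; this is unproblematic because the inclusion $\Qco(X)\hookrightarrow\O_X\Mod$ admits the coherator $C$ as right adjoint (recalled just before proposition \ref{espur}), so it preserves pushouts, and the tensor product $\scrM\otimes-$ on $\O_X\Mod$ is itself a left adjoint via $\mathscr{H}om(-,-)$ and so preserves those same pushouts. I do not foresee any other obstacle, and the argument is purely formal once the basic pushout-of-monomorphism fact is in hand.
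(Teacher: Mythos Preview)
Your argument is correct and is essentially a fleshed-out version of the paper's one-line proof. The paper merely records the key fact that if $f\circ g$ is a pure monomorphism with $f$ and $g$ monomorphisms then $g$ is a pure monomorphism, leaving to the reader the construction of such a factorization of an arbitrary embedding $\scrG\hookrightarrow\scrH$; your pushout square is precisely the standard way to produce that factorization, and your tensoring step is exactly the verification of the stated fact. Your remarks on the coherator and on reducing the locally absolutely pure case to modules via proposition~\ref{pure-allafine} are valid but go somewhat beyond what the paper bothers to say.
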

\begin{proof}
It follows from the fact that if $f\circ g$ is a pure monomorphism
with monomorphisms $f$ and $g$, then $g$ is a pure monomorphism.
\end{proof}

\begin{lemma}\label{ap4}
Let $\scrF$ be an $\O_X$-module. The following are equivalent:
\begin{enumerate}
\item $\scrF$ is absolutely pure in $\O_X \Mod$.
\item $\scrF \mid_{U_i}$ is absolutely pure in $\O_X \mid_{U_i} \Mod$ for a cover $\{U_i\}$ of $X$.
\end{enumerate}
\end{lemma}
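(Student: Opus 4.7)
The plan is to reduce both directions to the stalk criterion of Proposition \ref{pure1}. Since $(\scrF|_{U_i})_x = \scrF_x$ for each $x \in U_i$ and the $\{U_i\}$ cover $X$, both global and local absolute purity should encode the same stalk-level condition on $\scrF$.

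For $(2) \Rightarrow (1)$, the straightforward direction, I would start from an exact sequence $0 \to \scrF \to \scrG$ in $\O_X\Mod$ and restrict it to each $U_i$. By hypothesis the restriction is pure in $\O_X|_{U_i}\Mod$, so Proposition \ref{pure1} gives purity of the stalk maps $\scrF_x \to \scrG_x$ for every $x \in U_i$; since the $U_i$ cover $X$, purity holds on all stalks of $X$, and a second application of Proposition \ref{pure1} yields purity in $\O_X\Mod$.

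For $(1) \Rightarrow (2)$, the key idea is to produce, from a local embedding, a global one that restricts back to the original. Given $0 \to \scrF|_{U_i} \to \scrG$ in $\O_X|_{U_i}\Mod$, let $\iota = \iota_i : U_i \hookrightarrow X$. The extension by zero $\iota_!$ is exact for the open immersion $\iota$, so $\iota_!(\scrF|_{U_i}) \to \iota_!\scrG$ is a monomorphism in $\O_X\Mod$. I would then form the pushout $\scrH$ along the counit $\iota_!(\scrF|_{U_i}) \to \scrF$:
$$\xymatrix{\iota_!(\scrF|_{U_i}) \ar[r] \ar[d] & \iota_!\scrG \ar[d] \\ \scrF \ar[r] & \scrH.}$$
As pushouts of monomorphisms are monomorphisms in an abelian category, the induced map $\scrF \to \scrH$ is an embedding, and is pure by assumption $(1)$. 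Applying $\iota^{-1}$, which preserves pushouts (being a left adjoint) and satisfies $\iota^{-1}\iota_! \cong \mathrm{id}$, the restricted square becomes the pushout of $\mathrm{id}_{\scrF|_{U_i}}$ against $\scrF|_{U_i} \to \scrG$, which is simply $\scrG$, with the bottom map equal to the original embedding. Since purity is preserved by restriction (via Proposition \ref{pure1}, or directly because $\iota^{-1}$ commutes with tensor products for an open immersion), $\scrF|_{U_i} \to \scrG$ is pure.

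The main obstacle is exactly this fabrication step: producing a global embedding $\scrF \to \scrH$ whose restriction to $U_i$ reproduces the prescribed local one. The pushout along the $\iota_!$-counit accomplishes this, and the remaining verification is a formal manipulation with the adjunction $(\iota_!, \iota^{-1})$ and the universal property of pushouts.
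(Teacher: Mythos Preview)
Your proof is correct. Both directions go through as you describe; in particular the pushout construction for $(1)\Rightarrow(2)$ works because $\iota^{-1}$ applied to the counit $\iota_!\iota^{-1}\scrF\to\scrF$ is an isomorphism (by the triangle identity, since the unit of $(\iota_!,\iota^{-1})$ is an isomorphism for an open immersion), so the restricted pushout square has an isomorphism on the left and hence $\scrH|_{U_i}\cong\scrG$ with the bottom map recovering the original embedding.

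The paper's argument for $(1)\Rightarrow(2)$ also uses $j_!$ but avoids the pushout: it first observes that the counit $j_!(\scrF|_U)\hookrightarrow\scrF$ is itself a pure monomorphism (checked on stalks: it is the identity over $U$ and $0\to\scrF_x$ elsewhere), so the lemma on closure under pure subobjects makes $j_!(\scrF|_U)$ absolutely pure in $\O_X\Mod$. One then applies $j_!$ directly to a local embedding $0\to\scrF|_U\to\scrG$, obtaining a global embedding of $j_!(\scrF|_U)$ (not of $\scrF$), which is pure by absolute purity of $j_!(\scrF|_U)$; restricting the resulting stalkwise purity to points of $U$ finishes. So the paper trades your pushout for the extra input that absolute purity passes to pure subobjects, while your approach embeds $\scrF$ itself and appeals only to hypothesis $(1)$ plus formal properties of the adjunction. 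Both are short; yours is a bit more categorical, the paper's a bit more concrete on stalks.
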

\begin{proof}

$1.\Rightarrow 2.$ Let $U \subseteq X$ be open. Then the extension
of $\scrF\mid_U$  by zero outside $U$, $j_!(\scrF \mid_U)$,  is contained
in $\scrF$. Since the stalk of $j_!(\scrF \mid_U)$ is $\scrF_x$ if $x \in U$
and $0$  otherwise, $j_!(\scrF \mid_U)$ is a pure subsheaf of $\scrF$ in
$\O_X \Mod$. So $j_!(\scrF \mid_U)$ is absolutely pure in $\O_X \Mod$,
too.

Now let $\scrG$ be any $\O_X \mid_U$-module with an exact sequence $0
\rightarrow \scrF \mid_U \rightarrow \scrG$. Then $0 \rightarrow j_!(\scrF
\mid_U) \rightarrow j_!(\scrG)$ is still exact in $\O_X \Mod$. So it is
pure in $\O_X$. But this means that  $0 \rightarrow [j_!(\scrF
\mid_U)]_x \rightarrow [j_!(\scrG)]_x$ is pure for all $x \in X$.  For
$x \in U$, that exact sequence is equal to the exact sequence $0
\rightarrow (\scrF \mid_U)_x \rightarrow (\scrG)_x$ and $j_!(\scrF \mid_U)
\mid_U=\scrF \mid_U$ and $(j_!(\scrG))\mid_U=\scrG$. That proves the desired
implication.

\medskip\par\noindent
$2.\Rightarrow 1.$ Let $0 \rightarrow \scrF \rightarrow \scrG$ be an
exact sequence in $\O_X \Mod$. In order to show that it is pure
exact, we need to show that the morphism induced on the stalk is
pure exact, for every $x\in X$. But the restriction functor to open
subsets is left exact and $(\scrF \mid_U)_x = \scrF _x$. So the claim
follows.
\end{proof}

\begin{lemma}\label{ap5}
Let $\scrF$ be an $\O_X$-module. If $\scrF_x$ is absolutely pure for all
$x\in X$ then $\scrF$ is absolutely pure in $\O_X \Mod$.
\end{lemma}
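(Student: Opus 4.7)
The plan is to reduce the statement to Proposition \ref{pure1}, which characterizes pure exactness in $\O_X\Mod$ stalk-locally. Specifically, I would start with an arbitrary exact sequence $0 \to \scrF \to \scrG$ in $\O_X\Mod$ and aim to show it is pure exact; by Proposition \ref{pure1}, this amounts to verifying that each induced sequence $0 \to \scrF_x \to \scrG_x$ is pure exact in $\O_{X,x}\Mod$.

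First I would note that the stalk functor $(-)_x : \O_X\Mod \to \O_{X,x}\Mod$ is exact, so for every $x \in X$ the sequence
\[
0 \longrightarrow \scrF_x \longrightarrow \scrG_x
\]
is an exact sequence of $\O_{X,x}$-modules. Next, the hypothesis tells us that $\scrF_x$ is absolutely pure in $\O_{X,x}\Mod$. By the very definition of absolutely pure (applied in the category of $\O_{X,x}$-modules), every short exact sequence starting with $\scrF_x$ is pure exact; in particular the above stalk sequence is pure in $\O_{X,x}\Mod$.

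Having verified the stalk-local condition for every $x \in X$, I would invoke Proposition \ref{pure1} (the implication $2 \Rightarrow 1$) to conclude that $0 \to \scrF \to \scrG$ is pure exact in $\O_X\Mod$. Since $\scrG$ was an arbitrary $\O_X$-module containing $\scrF$, this shows that $\scrF$ is absolutely pure in $\O_X\Mod$.

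There is essentially no obstacle here: the argument is a short concatenation of the exactness of the stalk functor, the definition of absolutely pure applied at each stalk, and the geometric characterization of purity already established in Proposition \ref{pure1}. The only point worth double-checking is that the notion of ``absolutely pure'' used for $\O_{X,x}$-modules coincides with the classical module-theoretic one, which is immediate since $\O_{X,x}\Mod$ is an ordinary module category.
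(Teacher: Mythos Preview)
Your proposal is correct and follows essentially the same approach as the paper's proof: take an arbitrary exact sequence $0 \to \scrF \to \scrG$ in $\O_X\Mod$, pass to stalks, use absolute purity of $\scrF_x$ to get stalkwise purity, and invoke Proposition~\ref{pure1} to conclude. The paper's argument is identical but more terse, omitting the explicit mention of exactness of the stalk functor.
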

\begin{proof}
Let $0 \rightarrow \scrF \rightarrow \scrG$ be an exact sequence in $\O_X
\Mod$. To be pure in $\O_X \Mod$ is equivalent to be pure at the
induced morphism on the stalk for every $x\in X$. So that proves our
implication.
\end{proof}

Let $X=Spec(R)$ be an affine scheme. The next proposition shows that in order to check that a quasi--coherent $\O_X$-module $\widetilde{A}$ is absolutely pure, it suffices that its restrictions $\widetilde{A}|_{D(s_i)}$, $i=1,\ldots, n$, be absolutely pure, where $\cup_{i=1}^n D(s_i)=X$, and $s_1,\ldots,s_n\in R$.

\begin{proposition}\label{ab_1}
Let $R$ be a ring and
$s_1,s_2, \ldots, s_n$ a finite number of elements of $R$ which generate the unit ideal. Let $A$ be an $R$-module. If $A_{s_i}$ is absolutely pure  over
$R_{s_i}$ for every $i=1,\ldots, n$ then $A$ is absolutely pure over
$R$.
\end{proposition}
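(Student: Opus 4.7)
The plan is to reduce absolute purity of $A$ over $R$ to a local statement by combining the tensor-product characterization of purity with the local-global principle. Concretely, let $0\to A\to B$ be an exact sequence in $R\Mod$; I must show that for every $R$-module $T$ the induced map $T\otimes_R A\to T\otimes_R B$ is injective, and then conclude that the sequence is pure.

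First I would localize at each $s_i$. Since $R_{s_i}$ is flat over $R$, the sequence $0\to A_{s_i}\to B_{s_i}$ is exact in $R_{s_i}\Mod$. By hypothesis $A_{s_i}$ is absolutely pure over $R_{s_i}$, so this sequence is pure in $R_{s_i}\Mod$. Applying this to the $R_{s_i}$-module $T_{s_i}$, the map $T_{s_i}\otimes_{R_{s_i}} A_{s_i}\to T_{s_i}\otimes_{R_{s_i}} B_{s_i}$ is injective. Using the canonical isomorphisms $(T\otimes_R A)_{s_i}\cong T_{s_i}\otimes_{R_{s_i}} A_{s_i}$ and similarly for $B$, it follows that $(T\otimes_R A)_{s_i}\to (T\otimes_R B)_{s_i}$ is injective, i.e.\ the kernel $K$ of $T\otimes_R A\to T\otimes_R B$ satisfies $K_{s_i}=0$ for every $i$.

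The final step is the local-global principle: since $(s_1,\ldots,s_n)=R$, every prime $\mathfrak{p}\subseteq R$ lies in $D(s_i)$ for some $i$, so $K_{\mathfrak{p}}=(K_{s_i})_{\mathfrak{p}}=0$. Hence $K=0$, the map $T\otimes_R A\to T\otimes_R B$ is injective for every $T$, and therefore $0\to A\to B$ is pure in $R\Mod$. This shows $A$ is absolutely pure over $R$.

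There is no real obstacle here; the argument is a clean transfer from the local data. The only points that require care are (i) that the stalk/localization formulas for the tensor product are applied over the correct base ring, and (ii) that purity in $R_{s_i}\Mod$ is preserved under tensoring with an arbitrary $R_{s_i}$-module, which is exactly the tensor-product definition of purity already in use in the paper.
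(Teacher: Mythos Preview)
Your argument is correct and follows essentially the same route as the paper: both show that the kernel $K$ of $T\otimes_R A\to T\otimes_R B$ satisfies $K_{s_i}=0$ for all $i$ and then deduce $K=0$. The only cosmetic difference is in the last step, where the paper argues elementwise (using $s_i^{h_i}x=0$ together with a relation $1=\sum s_it_i$) while you invoke the local-global principle via prime ideals.
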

\begin{proof}
Given $A \subseteq B$, we want to prove the canonical morphism $M
\otimes A \rightarrow M\otimes B$ is injective for every module $M$.
Let $K= \Ker(M\otimes A \rightarrow M\otimes B)$. Then by our
hypothesis we get $K_{s_i}=0$ for each $i=1,\ldots,n$. So if $x\in
K$, then ${s_i}^{h_i} x =0$  for some $h_i\geq 0$. But the set
$\{s_1,s_2, \ldots, s_n\}$ generates  $R$.  So we have $s_1t_1 +
\ldots + s_n t_n=1$ for some $t_1,\ldots, t_n \in R$. And also
$(s_1t_1 + \ldots + s_n t_n)^h x=0$ if $h > h_1+\ldots + h_n-1$,
i.e., $x=1.x=1^h x=0$.
\end{proof}

Let $X=Spec(R)$ be an affine scheme. Now we will see that in order to check that a quasi--coherent $\O_X$-module $\widetilde{A}$ is absolutely pure, it suffices to check that, for each $P\in X$, each stalk $\widetilde{M}_P$ is an absolutely pure $\O_{X,P}$-module.

\begin{proposition}\label{ab2}
If $A_P$ is absolutely pure over $R_P$ for every prime ideal $P$ then $A$ is absolutely pure over $R$.
\end{proposition}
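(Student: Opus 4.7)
The plan is to use a standard local-to-global principle: a module is zero if and only if its localizations at all prime ideals are zero. Since the hypothesis gives us exactness after localizing at every prime, we can assemble the local information to get the global conclusion.

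More precisely, let $0 \to A \to B$ be any exact sequence of $R$-modules, and let $M$ be an arbitrary $R$-module. I want to show that the induced map $M \otimes_R A \to M \otimes_R B$ is injective. Set $K = \ker(M \otimes_R A \to M \otimes_R B)$. Since localization is an exact functor, for each prime $P$ of $R$ we have
$$K_P \cong \ker\bigl((M \otimes_R A)_P \to (M \otimes_R B)_P\bigr).$$
Using the canonical isomorphism $(N \otimes_R L)_P \cong N_P \otimes_{R_P} L_P$ (and its naturality), this becomes
$$K_P \cong \ker\bigl(M_P \otimes_{R_P} A_P \to M_P \otimes_{R_P} B_P\bigr).$$

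Next, localizing the inclusion $0 \to A \to B$ at $P$ yields an exact sequence $0 \to A_P \to B_P$ of $R_P$-modules. By hypothesis $A_P$ is absolutely pure over $R_P$, so this embedding is pure; tensoring with the $R_P$-module $M_P$ preserves injectivity, giving $K_P = 0$.

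Since $K_P = 0$ for every prime ideal $P$ of $R$, the standard argument (if $0 \neq x \in K$, then $\Ann_R(x)$ is contained in some maximal ideal $P$, and then $x/1 \neq 0$ in $K_P$) forces $K = 0$. Hence $M \otimes_R A \to M \otimes_R B$ is injective for every $M$, proving that the embedding $A \hookrightarrow B$ is pure. As $0 \to A \to B$ was arbitrary, $A$ is absolutely pure over $R$.

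There is really no main obstacle here: the proof is a routine application of the exactness of localization together with the local-to-global principle. The only small point worth emphasizing is the compatibility between localization and tensor product that lets one identify $K_P$ with the kernel computed in $R_P$-modules; everything else is formal.
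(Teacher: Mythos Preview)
Your proof is correct, but it takes a slightly different route from the paper. The paper invokes the characterization of absolute purity via $\Ext^1$: for a finitely presented $R$-module $M$ one has the natural isomorphism $(\Ext^1_R(M,A))_P \cong \Ext^1_{R_P}(M_P,A_P)$, and the right-hand side vanishes by hypothesis, so $\Ext^1_R(M,A)$ is locally zero and hence zero. Your argument instead works directly with the tensor-product definition of purity, showing that the kernel of $M\otimes_R A \to M\otimes_R B$ vanishes after localization at every prime and hence is zero globally. This is essentially the same local--global template the paper uses in the immediately preceding Proposition~\ref{ab_1}. Your approach is a bit more elementary in that it avoids the commutation of $\Ext^1$ with localization (which needs $M$ finitely presented); the paper's approach is a touch shorter once that commutation is granted. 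Either way, the content is the same local-to-global principle.
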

\begin{proof}
Let $M$ be a finitely presented $R$-module. We want to prove that \\ $\Ext_R^1(M,A)=0.$ Since $M$ is finitely presented,
$$(\Ext_R^1(M,A))_P\cong \Ext_{R_P}^1(M_P,A_P)=0.$$
Since this is true for each prime ideal $P$, $\Ext_R^1(M,A)=0$. So $A$ is absolutely pure.
\end{proof}
Neither Propositions \ref{ab_1} and \ref{ab2} do not assume any condition on the ring $R$. Their converses are not true in general. However they are if
$R$ is coherent, see \cite[theorem 3.21]{Pinzon}. So it makes sense to
define a notion of locally absolutely pure quasi-coherent sheaves
over a locally coherent scheme. A scheme $(X,\O_X)$ is
\emph{locally coherent} provided that $\O_X(U)$ is a coherent ring, for
each affine open subset $U \subseteq X$. Since coherence descends along faithfully flat morphisms of rings (see \cite[corollary 2.1]{Harris}), it follows that $X$ is locally coherent if, and only if, $\O_X(U_i)$ is coherent for each $i\in I$ of some affine open covering $\{U_I\}_{i\in I}$ of $X$. So over a locally coherent scheme,
the next proposition states that in order to prove whether a
quasi-coherent sheaf is locally absolutely pure, it is sufficient
to look at  some cover by affine subsets of $X$. And these show that locally
absolute purity is a stalkwise property.
\begin{proposition}\label{ap3}
Let $(X,\O_X)$ be a locally coherent scheme. Then the following conditions are equivalent for a quasi-coherent sheaf $\scrF$:
\begin{enumerate}
\item $\scrF(U)$ is absolutely pure for every affine $U$.
\item $\scrF(U_i)$ is absolutely pure  for all $i \in I$ for some cover $\{U_i\}_{i\in I}$ of affine open subsets.
\item $\scrF_x$ is absolutely pure for all $x \in X$.
\end{enumerate}
\end{proposition}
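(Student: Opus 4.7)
The plan is to establish the cycle $1 \Rightarrow 2 \Rightarrow 3 \Rightarrow 1$. Throughout, the role of the locally coherent hypothesis is to give access to the converse statements of propositions \ref{ab_1} and \ref{ab2}, namely that over a coherent ring $R$, absolute purity of an $R$-module is preserved under localization (and conversely is detected locally). This is \cite[theorem 3.21]{Pinzon}, which is the only non-formal input needed.

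The implication $1 \Rightarrow 2$ is trivial: any affine open covering $\{U_i\}_{i \in I}$ of $X$ witnesses condition $2$. For $2 \Rightarrow 3$, fix $x \in X$ and choose $U_i$ from the given cover with $x \in U_i$. Writing $U_i = \Spec(R_i)$, the ring $R_i$ is coherent since $X$ is locally coherent, and $\scrF(U_i)$ is absolutely pure over $R_i$ by hypothesis. Since $\scrF$ is quasi-coherent, the stalk satisfies $\scrF_x \cong \scrF(U_i)_{\mathfrak{p}}$ where $\mathfrak{p}$ is the prime of $R_i$ corresponding to $x$, and $\O_{X,x} \cong (R_i)_{\mathfrak{p}}$. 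Invoking \cite[theorem 3.21]{Pinzon} (the converse of proposition \ref{ab2}, valid because $R_i$ is coherent), the localization $\scrF(U_i)_{\mathfrak{p}}$ is absolutely pure over $(R_i)_{\mathfrak{p}}$, i.e.\ $\scrF_x$ is absolutely pure over $\O_{X,x}$.

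For $3 \Rightarrow 1$, let $U \subseteq X$ be any affine open, write $U = \Spec(R)$ with $R$ coherent. For every prime $P$ of $R$, corresponding to some $x \in U$, we have $\scrF(U)_P \cong \scrF_x$, which is absolutely pure over $\O_{X,x} \cong R_P$ by assumption. Proposition \ref{ab2}, which holds over any ring, then yields that $\scrF(U)$ is absolutely pure over $R$. This closes the cycle.

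The only potential obstacle is verifying that the identifications $\scrF_x \cong \scrF(U)_P$ and $\O_{X,x} \cong R_P$ transport absolute purity correctly; this is purely formal from the equivalence between $\Qco(\Spec R)$ and $R\Mod$. The entire argument reduces to a clean application of the stalkwise and affine-local characterizations of absolute purity that are available precisely because of the coherence hypothesis, without which the localization step in $2 \Rightarrow 3$ would fail.
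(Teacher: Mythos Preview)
Your proof is correct and follows essentially the same route as the paper's: the nontrivial implications are $2 \Rightarrow 3$ via \cite[theorem 3.21]{Pinzon} (localization over a coherent ring preserves absolute purity) and $3 \Rightarrow 1$ via proposition \ref{ab2}. Your write-up is somewhat more explicit about the stalk/localization identifications, but the argument is the same.
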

\begin{proof}
We just need to prove the implications $(2 \Rightarrow 3)$ and $(3\Rightarrow 1)$. By \cite[theorem 3.21]{Pinzon} the localization of
an absolutely pure module over a coherent ring is again absolutely pure, so
 the first implication follows. For the second, Let $\scrF(U) \cong M$ for an $\O_X(U)$-module $M$. By assumption, $\scrF(U)_P \cong M_P$ is absolutely pure
 for all prime ideal $P$ of $\O_X(U)$. Hence,  $\scrF(U) = M$ is also absolutely pure by proposition \ref{ab2}.
\end{proof}

The next lemma shows that the locally absolutely pure objects in
$\Qco(X)$ on a locally coherent scheme $X$ are exactly the
absolutely pure $\O_X$-modules which  are quasi-coherent.

\begin{lemma}
Let $X$ be a locally coherent scheme and $\scrF$ be a quasi-coherent sheaf. Then $\scrF$ is locally absolutely
pure if and only if $\scrF$ is absolutely pure in $\O_X \Mod$.
\end{lemma}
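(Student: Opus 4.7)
The strategy is to prove each direction by reducing to the stalk- and affine-open characterisations already at hand. For $(\Rightarrow)$, if $\scrF(U)$ is absolutely pure over $\O_X(U)$ for every affine open $U\subseteq X$, then proposition \ref{ap3} (where the local coherence hypothesis is essential) gives that each stalk $\scrF_x$ is absolutely pure, after which lemma \ref{ap5} immediately yields that $\scrF$ is absolutely pure in $\O_X\Mod$.

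For the converse $(\Leftarrow)$, I fix an affine open $U=\Spec(R)\subseteq X$, set $A=\scrF(U)$, and take an arbitrary monomorphism $A\hookrightarrow B$ in $R\Mod$. The key construction will be an $\O_X$-module containing $\scrF$ that realises this inclusion over $U$, obtained as the pushout in $\O_X\Mod$
$$\xymatrix{\imath_{!}\widetilde{A}\ar[r]\ar[d] & \imath_{!}\widetilde{B}\ar[d] \\ \scrF\ar[r] & \scrP}$$
where $\imath:U\hookrightarrow X$ is the open immersion and the left vertical arrow is the counit of the adjunction $(\imath_{!},\imath^{*})$. Exactness of $\imath_{!}$ for an open immersion makes the top horizontal arrow a monomorphism, so the pushout arrow $\scrF\hookrightarrow\scrP$ is a monomorphism as well. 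Because $\imath^{*}=(-)|_U$ preserves colimits and, by the triangle identity, sends the counit at $\scrF$ to the identity on $\scrF|_U=\widetilde{A}$, restricting the pushout diagram to $U$ will identify $\scrP|_U\cong \widetilde{B}$, and the restriction of $\scrF\hookrightarrow\scrP$ will coincide with $\widetilde{A}\hookrightarrow\widetilde{B}$.

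Applying the absolutely pure hypothesis to $\scrF\hookrightarrow \scrP$ makes this inclusion pure in $\O_X\Mod$. Proposition \ref{pure1} will transfer this to stalk-wise purity at every point of $U$, and a second application of proposition \ref{pure1} within $\O_U\Mod$ will then show that $\widetilde{A}\hookrightarrow \widetilde{B}$ is pure in $\O_U\Mod$. Finally, proposition \ref{pure-allafine}, applied to this quasi-coherent inclusion on the affine scheme $U$, will deliver purity of $A\hookrightarrow B$ in $R\Mod$; since $B$ was arbitrary, $A=\scrF(U)$ is absolutely pure, so $\scrF$ is locally absolutely pure. The only point that demands genuine care is the identification $\scrP|_U\cong \widetilde{B}$, i.e.\ that the pushout computes correctly after applying $\imath^{*}$; everything else is a straightforward chaining of the purity-transfer statements already proved.
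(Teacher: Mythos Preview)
Your proof is correct. The forward direction via proposition~\ref{ap3} and lemma~\ref{ap5} is effectively what the paper does (the paper cites lemma~\ref{ap4} and proposition~\ref{ap3}, but the content is the same passage through stalks).

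For the converse you take a somewhat different route from the paper. The paper's argument, via lemma~\ref{ap4}, first observes that $j_!(\scrF|_U)$ is a pure sub\-sheaf of $\scrF$ and hence itself absolutely pure in $\O_X\Mod$; then for any monomorphism $\scrF|_U\hookrightarrow\scrG$ in $\O_U\Mod$ one applies $j_!$, uses absolute purity of $j_!(\scrF|_U)$, and reads off stalkwise purity on $U$. After that one still needs the (implicit) affine step that $\scrF|_U$ absolutely pure in $\O_U\Mod$ forces $\scrF(U)$ absolutely pure as a module. Your pushout construction collapses these two steps into one: you build a global embedding $\scrF\hookrightarrow\scrP$ whose restriction to $U$ \emph{is} the prescribed $\widetilde{A}\hookrightarrow\widetilde{B}$, so the hypothesis applies directly to $\scrF$ without passing through closure of absolute purity under pure subobjects. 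This is slightly more self-contained and avoids invoking lemma~\ref{ap4}; the paper's approach, on the other hand, packages the localisation once and for all in lemma~\ref{ap4} and then just quotes it. The only delicate point you flagged---that $\imath^*$ preserves the pushout and turns the counit into the identity---is indeed routine since $\imath^*$ has the right adjoint $\imath_*$ and $\imath^*\imath_!\cong\id$ for an open immersion.
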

\begin{proof}
It follows by lemma \ref{ap4} and proposition \ref{ap3}.
\end{proof}
 At this
point, we may consider  the relation between absolutely pure
quasi-coherent sheaves and locally absolutely pure quasi-coherent
sheaves.
\begin{lemma}\label{ap6}
Let $X$ be a locally coherent scheme. Every locally absolutely pure quasi-coherent sheaf is absolutely
pure in $\Qco(X)$.
\end{lemma}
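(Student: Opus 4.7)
The plan is to deduce the result almost immediately from the previous lemma, which states that on a locally coherent scheme, a quasi-coherent sheaf is locally absolutely pure if and only if it is absolutely pure as an $\O_X$-module. So, assuming $\scrF$ is locally absolutely pure, I can use that lemma to reduce to knowing that $\scrF$ is absolutely pure in $\O_X\Mod$.

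Next, I would take an arbitrary exact sequence $0 \to \scrF \to \scrG$ in $\Qco(X)$ and observe that the inclusion functor $\Qco(X) \hookrightarrow \O_X\Mod$ preserves monomorphisms: if $\phi\colon\scrF\to\scrG$ is a morphism of quasi-coherent sheaves, its kernel in $\O_X\Mod$ is quasi-coherent because on each affine open $U$ one has $(\ker\phi)|_U=\widetilde{\ker(\phi_U)}$. Therefore the sequence $0 \to \scrF \to \scrG$ is still exact when viewed in $\O_X\Mod$.

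Then, since $\scrF$ is absolutely pure in $\O_X\Mod$, this sequence is pure exact in $\O_X\Mod$, i.e.\ for every $\scrM\in \O_X\Mod$ the induced map $\scrM\otimes \scrF\to \scrM\otimes\scrG$ is a monomorphism. But this is precisely the definition of pure exactness in $\Qco(X)$ adopted in the paper (pure exactness is tested against all sheaves of $\O_X$-modules, not merely the quasi-coherent ones; cf.\ the paragraph preceding remark \ref{rem}). Hence the original sequence is pure exact in $\Qco(X)$, proving that $\scrF$ is absolutely pure in $\Qco(X)$.

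There is no serious obstacle; the argument is essentially an unwinding of definitions, and the only non-formal input is the preceding lemma identifying locally absolutely pure quasi-coherent sheaves with absolutely pure $\O_X$-modules in the locally coherent setting, together with the harmless observation about exactness of the inclusion $\Qco(X)\hookrightarrow\O_X\Mod$.
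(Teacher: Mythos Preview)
Your argument is correct. It differs from the paper's one-line proof only in packaging: the paper cites proposition~\ref{ap3} and proposition~\ref{pure2} directly (locally absolutely pure means absolutely pure on sections or stalks, and purity of a sequence in $\Qco(X)$ can be checked on sections or stalks), whereas you route through the immediately preceding lemma (locally absolutely pure $\Leftrightarrow$ absolutely pure in $\O_X\Mod$) and then observe that the paper's notion of pure exactness in $\Qco(X)$ is by definition the same as pure exactness in $\O_X\Mod$, so no appeal to proposition~\ref{pure2} is needed. Both routes are essentially the same unwinding of definitions, and neither requires more than the local characterizations already established; your version has the minor advantage of making explicit that the inclusion $\Qco(X)\hookrightarrow\O_X\Mod$ preserves monomorphisms, a point the paper leaves implicit.
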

\begin{proof}
This follows from proposition \ref{ap3} and proposition \ref{pure2}.
\end{proof}

The converse of lemma \ref{ap6} is not clear in general. But it is
true if $X=Spec(R)$ is affine and $R$ is coherent, or if $X$ is locally Noetherian. The first case is
clear since $\Qco(X) \cong \O_X(X)\Mod$. For the second, let $\scrF$ be
absolutely pure in $\Qco(X)$ and $E(\scrF)$ be its injective
envelope in $\Qco(X)$. Then $0\rightarrow \scrF \rightarrow E(\scrF)$ is
pure exact. So, for each affine open subset $U \subseteq X$,
$0\rightarrow \scrF(U) \rightarrow E(\scrF)(U)$ is pure exact in $\O_X(U)
\Mod$. But $E(\scrF)(U)$ is an injective  $\O_X(U)$-module and $\scrF(U)$
is a pure submodule of it. Hence $\scrF(U)$ is absolutely pure, for
each affine $U \subseteq X$. So, $\scrF$ is a locally absolutely pure
quasi-coherent sheaf.

\begin{proposition}
Let $X$ be a locally coherent scheme. If the class of injective sheaves in  $\O_X \Mod$ is equal to the
class  of absolutely pure sheaves in $\O_X \Mod$, then $X$ is a
locally Noetherian scheme.
\end{proposition}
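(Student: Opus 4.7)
The plan is to reduce the global statement to a ring-theoretic one via Megibben's classical theorem: a commutative ring $R$ is Noetherian if and only if every absolutely pure $R$-module is injective \cite{Megibben}. Accordingly, I would fix an arbitrary affine open $U\subseteq X$ and aim to show that $\O_X(U)$ is Noetherian; since this suffices for $X$ to be locally Noetherian, the entire argument rests on proving that every absolutely pure $\O_X(U)$-module $A$ is injective.

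Given such an $A$, the idea is to manufacture a global $\O_X$-module from it and appeal to the standing hypothesis. Let $\imath:U\hookrightarrow X$ be the open immersion and consider the extension by zero $\scrK:=\imath_!(\widetilde{A})$ (in the sense used in the proof of lemma \ref{ap4}). The key step is to verify that $\scrK$ is absolutely pure in $\O_X\Mod$. By lemma \ref{ap5} this reduces to a check on stalks: $\scrK_x=0$ for $x\notin U$, which is trivially absolutely pure, and for $x\in U$ one has $\scrK_x\cong A_{\mathfrak{p}_x}$, where $\mathfrak{p}_x$ is the prime of $\O_X(U)$ corresponding to $x$. Here the locally coherent hypothesis enters essentially: since $\O_X(U)$ is coherent, \cite[theorem 3.21]{Pinzon} guarantees that localisations of absolutely pure modules over a coherent ring remain absolutely pure over the localised ring $\O_{X,x}$. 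By the standing assumption of the proposition, $\scrK$ must therefore be injective in $\O_X\Mod$.

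The remaining task is to translate this global injectivity back into injectivity of $A$ over $\O_X(U)$. Since $\imath_!$ is exact for the open immersion $\imath$ and is left adjoint to the restriction $(-)|_U$, the restriction functor preserves injectives; hence $\widetilde{A}=\scrK|_U$ is injective in $\O_U\Mod$. A routine adjunction argument then upgrades this to injectivity of $A$ in $\O_X(U)\Mod$: any injection $B\hookrightarrow C$ of $\O_X(U)$-modules equipped with a map $B\to A$ sheafifies to an analogous diagram of quasi-coherent $\O_U$-modules, which can be completed thanks to injectivity of $\widetilde{A}$ in $\O_U\Mod$, and taking global sections over $U$ recovers the desired lifting $C\to A$. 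Megibben's theorem now forces $\O_X(U)$ to be Noetherian, and since $U$ was arbitrary, $X$ is locally Noetherian.

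The main obstacle, and the only place where the locally coherent hypothesis is genuinely used, is the stalkwise verification of absolute purity of $\scrK$ via Pinzon's theorem; everything else is formal bookkeeping with the adjunctions between extension by zero, restriction to an open subset and sheafification.
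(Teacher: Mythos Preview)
Your argument is correct and follows essentially the same route as the paper's: form the extension by zero $\imath_!(\widetilde{A})$, verify its absolute purity in $\O_X\Mod$ via the stalkwise criterion (lemma \ref{ap5}), invoke the hypothesis to obtain injectivity, then restrict back to $U$ and descend to $\O_X(U)\Mod$. You are in fact more explicit than the paper at the one delicate point: the paper simply cites lemma \ref{ap5} without spelling out that the stalks $A_{\mathfrak p_x}$ are absolutely pure because $\O_X(U)$ is coherent and Pinzon's theorem applies, whereas you identify this as the place where local coherence is genuinely needed. The only cosmetic difference is in the last step: the paper passes through $\Qco(U)\simeq \O_X(U)\Mod$ to conclude that $A$ is injective, while you unwind the adjunction directly; both are the same observation.
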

\begin{proof}
Suppose that these classes are equal. Let $M$ be an absolutely pure
$\O_X(U)$-module where $U$ is an affine open subset. Then the sheaf
$j_! (\widetilde{M})$ obtained by extending $\widetilde{M}$ by zero
outside $U$ is absolutely pure $\O_X$-module by lemma \ref{ap5}. By
assumption, it is injective in $\O_X \Mod$. So, its restriction
$(j_! (\widetilde{M})) \mid_U = \widetilde{M}$ is injective in $\O_X
\mid_U \Mod$. Since $\widetilde{M}$ is quasi-coherent, it is
injective in $\Qco(U)$ which implies that $M$ is injective
$\O_X(U)$-module. So $\O_X(U)$ is Noetherian ring and $X$ is a
locally Noetherian scheme.
\end{proof}

Now we will extend the known fact that a ring $R$ is
Noetherian if and only if each absolutely pure $R$-module  is injective (see \cite[theorem 3]{Megibben}) for closed subschemes of
$\mathbb{P}^n(R)$ which are locally coherent. Let $R$ be a commutative
ring and $X=\mathbb{P}^n(R)$ be a projective scheme over $R$, where
$n\in \mathbb{N}$. Then take a cover of $X$ consisting of affine open
subsets $D_+(x_i)$  for all $i=0,\ldots n$, and
 all possible intersections. In this case, our cover contains  basic open subsets of this form
$$D_+( \prod_{i \in v}x_i),$$ where $v \subseteq \{0,1, \ldots ,n
\}$. It is known that the category of quasi-coherent sheaves over a
scheme is equivalent to the class of certain module
representations over some quiver satisfying the cocycle condition, see \cite{EE}. In our case, the
vertices of our quiver are all subsets of $\{0,1,\ldots,n\}$ and we
have only one edge $v\rightarrow w$ for each $v\subseteq w \subseteq
\{0,1,\ldots,n\}$ since $D_+(\prod_{i \in w}x_i) \subseteq
D_+(\prod_{i \in v}x_i)$. Its ring representation has
$$\O_{\mathbb{P}^n(R)}(D_+(\prod_{i \in v}x_i)) =R[x_0,\ldots,x_n]_{(\prod_{i \in v}x_i)}$$
on each vertex $v$, which is the subring of the localization
$R[x_0,\ldots,x_n]_{\prod_{i \in v}x_i}$ containing its degree zero
elements. It is isomorphic to the polynomial ring  on the ring $R$
with the variables $\frac{x_j}{x_i}$ where $j=0, \ldots ,n$ and $i
\in v$. We denote this polynomial ring by $R[v]$. Then the
representation $\scrR$ with respect to this quiver with relations is defined as
$\scrR(v)=R[v]$, for each vertex $v$ and there is an edge $\scrR(v) \hookrightarrow \scrR(w)$ provided that $v
\subseteq w$. Finally, a quasi--coherent sheaf $\scrM$ on $\Qco(X)$ is uniquely determined by a compatible family of $\scrR(v)$-modules $\scrM(v)$, satisfying that $$S_{vw}^{-1} f_{vw} :S_{vw}^{-1}\scrM(v)
\longrightarrow S_{vw}^{-1}\scrM(w)=\scrM(w)$$ is an isomorphism as
$R[w]$-modules for each $f_{vw}:\scrM(v) \rightarrow \scrM(w)$ where
$S_{vw}$ is the multiplicative set generated by the
$\{x_j/x_i|\textrm{ }  j \in w \setminus v ,\textrm{ }i \in v\} \cup
\{1\}$ and $v\subset w$.

Recall that a closed subscheme $X$ of $\mathbb{P}^n(R) $ is given
by a quasi-coherent sheaf of ideals, i.e. we have an ideal  $ I_v
\subseteq R[v] $ for each $v$ with  $R[w]\otimes_{R[v]} I_v \cong
I_w $ when $v \subseteq w$. This means $I_v \rightarrow I_w $ is the
localization by the same multiplicative set as above. But then
$R[v]/I_v \rightarrow R[w]/I_w$ is also a localization. So, by abusing the notation, we shall also denote by $\scrR$ the structural sheaf of rings attached to $X$.

\begin{proposition}
A closed subcheme $X \subseteq \mathbb{P}^n(R) $ which is locally coherent (for instance if $X=\mathbb{P}^n(R)$ and $R$ is stably coherent) is locally Noetherian if and only if locally absolutely pure quasi-coherent sheaves are locally
injective.
\end{proposition}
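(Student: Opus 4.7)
The forward direction is a direct application of Megibben's theorem: if $X$ is locally Noetherian then each ring $\scrR(v)/I_v$ is Noetherian, so any absolutely pure module over it is injective, and consequently a locally absolutely pure $\scrF\in\Qco(X)$ is locally injective.

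For the converse, my plan is to reduce to showing $R_i:=\scrR(\{i\})/I_{\{i\}}$ is Noetherian for each $i=0,\ldots,n$, because every other $\scrR(v)/I_v$ is a localization of some $R_i$ and hence Noetherian as soon as the $R_i$ are. So, given an absolutely pure $R_i$-module $M$, I will produce a locally absolutely pure quasi-coherent sheaf $\scrM$ on $X$ with $\scrM(\{i\})=M$; the hypothesis then forces $M$ to be injective, and Megibben's theorem concludes the argument. The natural candidate is $\scrM:=(j_i)_*(\widetilde{M})$, where $j_i:X_i\hookrightarrow X$ is the open immersion of the affine piece $X_i=\Spec(R_i)$. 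Since $X$ is concentrated (being a closed subscheme of $\mathbb{P}^n(R)$), the morphism $j_i$ is quasi-compact and quasi-separated, so $(j_i)_*$ preserves quasi-coherence; at each vertex $w$ the value is $\scrM(w)=S^{-1}_{\{i\},\{i\}\cup w}\,M$, which is naturally a module over the further localization $\scrR(\{i\}\cup w)/I_{\{i\}\cup w}$ of both $R_i$ and $\scrR(w)/I_w$.

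The heart of the proof, and what I expect to be the main obstacle, is verifying that $\scrM$ is locally absolutely pure, i.e.\ that each $\scrM(w)$ is absolutely pure over $\scrR(w)/I_w$. Absolute purity of $\scrM(w)$ over the bigger ring $\scrR(\{i\}\cup w)/I_{\{i\}\cup w}$ is the easy half: $\scrM(w)$ is a localization of $M$, $R_i$ is coherent, and \cite[theorem 3.21]{Pinzon} says that localizations of absolutely pure modules over coherent rings remain absolutely pure. The delicate step is to transfer this absolute purity along the restriction of scalars induced by the localization map $\scrR(w)/I_w\to\scrR(\{i\}\cup w)/I_{\{i\}\cup w}$. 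For this I intend to establish the following general fact: for a localization $T\to S^{-1}T$ of commutative rings and an $S^{-1}T$-module $N$, $N$ is absolutely pure over $T$ if and only if it is absolutely pure over $S^{-1}T$. This rests on two standard observations: every finitely presented $S^{-1}T$-module is the localization $S^{-1}F$ of a finitely presented $T$-module $F$, and $\Ext^1_T(F,N)\cong\Ext^1_{S^{-1}T}(S^{-1}F,N)$ because $N$ is already $S^{-1}T$-linear. Once this is in place, $\scrM$ is locally absolutely pure; by hypothesis it is locally injective, so in particular $M=\scrM(\{i\})$ is injective over $R_i$, and Megibben's theorem yields the Noetherianity of $R_i$, completing the reduction.
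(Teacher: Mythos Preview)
Your proof is correct and follows essentially the same route as the paper: push an absolutely pure module along the open immersion of an affine piece, verify the direct image is locally absolutely pure using that localizations of absolutely pure modules over coherent rings stay absolutely pure, and then invoke Megibben. The only cosmetic differences are that the paper carries out the argument for an arbitrary vertex $v$ rather than just singletons, and it cites \cite[theorem 3.20]{Pinzon} for the restriction-of-scalars step you prove by hand.
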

\begin{proof}

``If'' part is clear. Indeed, if a scheme is locally Noetherian,
then all classes of locally absolutely pure, absolutely pure,
locally injective and injective quasi-coherent sheaves are equal, by
\cite[II, proposition 7.17, theorem 7.18]{Hartshorne}.

For the ``only if'' part, suppose that the class of locally
injective and locally absolutely pure quasi-coherent sheaves are
equal. As explained above, we deal with  a cover $\{D_+( \prod_{i
\in v}x_i)\}_{v\subseteq \{1,\ldots,n\}}$ of basic affine open
subsets of $X$ since locally absolutely purity  is
independent of choice of the base by proposition \ref{ap3}.  Let $M$
be an absolutely pure $R[v]$-module for some $v\subseteq
\{1,\ldots,n\}$. By taking its direct image $\iota_
* (\widetilde{M})$, we get a locally absolutely pure quasi-coherent
sheaf on $X$. Indeed,  $\iota_ *
(\widetilde{M})(D_+(\prod_{i \in w}x_i))=S_{vw}^{-1}M(v)$ for $v
\subseteq w $ is absolutely pure $R[w]$-module by
\cite[theorem 3.21]{Pinzon} and $\iota_ *
(\widetilde{M})(D_+(\prod_{i \in w}x_i))= \widetilde{M}(D_+(\prod_{i
\in w}x_i) \cap D_+(\prod_{i \in v}x_i))$ as $R[w]$-module for $v
\nsubseteq w$. But $$\widetilde{M}(D_+(\prod_{i \in w}x_i) \cap
D_+(\prod_{i \in v}x_i))= S_{v(v \cup w)}^{-1}M(v)$$ is absolutely
pure as $R[(v \cup w)]$-module and since $R[(v \cup w)]=S_{v(v \cup
w)}^{-1} R[w]$, it is also absolutely pure as $R[w]$-module, by
\cite[theorem 3.20]{Pinzon}. By assumption $\iota_ *
(\widetilde{M})$ is locally injective, that is, $(\iota_ *
(\widetilde{M})) (D_+(\prod_{i \in v}x_i))=M$ is injective. So,
$R[v]$ is Noetherian, by \cite[theorem 3]{Megibben}. This implies
that $X$ is locally Noetherian.
\end{proof}

Note that the class of locally absolutely pure quasi-coherent
sheaves over a locally coherent scheme is closed under direct limits
and coproducts since absolutely pure modules over  coherent rings
are closed under direct limits, \cite[proposition 2.4]{Pinzon}.
\begin{theorem}
Let $X$ be a locally coherent scheme. The class of locally absolutely pure quasi-coherent sheaves is a covering class.
\end{theorem}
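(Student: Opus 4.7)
The plan is to invoke Enochs' classical criterion that, in a Grothendieck category, a class closed under direct limits is covering whenever it is precovering. Closure of the class $\mathcal{L}$ of locally absolutely pure quasi-coherent sheaves under direct limits is already noted in the paragraph preceding the theorem (using the analogous closure property for absolutely pure modules over a coherent ring, due to Pinzon), so the entire task reduces to showing that $\mathcal{L}$ is precovering in $\Qco(X)$.

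To establish precovering, I would employ a deconstructibility-type argument together with the small-object machinery adapted to Grothendieck categories. Fix an affine open cover $\{U_i\}_{i\in I}$ of $X$ witnessing local coherence. The strategy is to exhibit a set $\mathcal{S}\subseteq \mathcal{L}$ of representatives of locally absolutely pure sheaves whose sections on each $U_i$ are bounded in cardinality by a fixed regular cardinal $\kappa$, and then show that every $\scrF\in\mathcal{L}$ is the directed union of its subobjects belonging to $\mathcal{S}$.

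The construction of these bounded-size LAP subobjects is the heart of the argument. Given $\scrF\in\mathcal{L}$ and any $\kappa$-small quasi-coherent subsheaf $\scrG_0\subseteq\scrF$, one iteratively enlarges $\scrG_0$ as follows: on each $U_i$, invoke the cardinality principle for absolutely pure modules over the coherent ring $\O_X(U_i)$ (every $\kappa$-small submodule of an absolutely pure module embeds into a $\kappa$-small absolutely pure submodule) to replace $\scrG_0(U_i)$ by a $\kappa$-bounded absolutely pure $\O_X(U_i)$-submodule of $\scrF(U_i)$; then patch these affine enlargements via the quasi-coherent restriction data to produce $\scrG_1\subseteq\scrF$. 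Transfinitely iterating this enlargement-and-gluing procedure $\kappa$ many times and taking the directed union yields a subsheaf $\scrG_\infty\subseteq\scrF$ of bounded size containing $\scrG_0$, and proposition \ref{ap3} ensures that $\scrG_\infty$ indeed lies in $\mathcal{L}$, since local absolute purity is detected on the chosen affine cover.

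The main technical obstacle is ensuring compatibility of the affine-local absolutely pure lifts across the intersections $U_i\cap U_j$: the lift chosen on $U_i$ must, after further enlargement, agree with its restriction to $U_i\cap U_j$ and with the lift chosen on $U_j$, so that the glued data are genuinely quasi-coherent. This is handled by alternating the enlargement step among the affines of the cover and using the fact that absolutely pure submodules are preserved under localization on coherent rings. Once this deconstructibility is in place, $\mathcal{L}$ is precovering by the standard set-theoretic small-object argument, and covering then follows from the direct-limit closure via Enochs' theorem, completing the proof.
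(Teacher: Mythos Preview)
Your overall strategy matches the paper's: establish that $\mathcal{L}$ is precovering via a set $\mathcal{S}$ of bounded-size objects, then upgrade to covering using closure under direct limits. However, there is a gap in the middle of your argument, and the paper's treatment of that step is both simpler and more complete.

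The gap is this: showing that every $\scrF\in\mathcal{L}$ is a \emph{directed union} of subobjects from $\mathcal{S}$ is not the same as showing that $\scrF$ admits an $\mathcal{S}$-\emph{filtration}. Deconstructibility in the sense required by the small-object argument means $\mathcal{L}=\Filt(\mathcal{S})$, i.e., each $\scrF\in\mathcal{L}$ has a continuous chain whose successive \emph{quotients} lie in $\mathcal{S}$. Your construction produces small LAP subobjects $\scrG_\infty\subseteq\scrF$, but to iterate and obtain a genuine filtration you must know that $\scrF/\scrG_\infty$ is again locally absolutely pure, so that the procedure can be applied again to the quotient. You never address this step, and ``directed union of $\mathcal{S}$-subobjects'' by itself does not feed into the small-object machinery.

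The paper handles precisely this point, and in a way that also streamlines your enlargement procedure. First, it invokes \cite[corollary 3.5]{EE}: every $\kappa$-small subsheaf of a quasi-coherent sheaf $\scrF$ embeds into a $\kappa$-small quasi-coherent subsheaf that is \emph{pure} in $\scrF$. This single global statement replaces your affine-local ``cardinality principle'' and the iterated patching (and, in fact, the most natural proof of your cardinality principle \emph{is} via pure submodules, since a pure submodule of an absolutely pure module is itself absolutely pure). Second---and this is the step you are missing---over a coherent ring a quotient of an absolutely pure module by a pure submodule is again absolutely pure \cite[proposition 4.2]{Pinzon}; hence on a locally coherent scheme the quotient of an LAP sheaf by a pure subsheaf is LAP. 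These two facts together yield $\mathcal{L}=\Filt(\mathcal{S})$, from which precovering follows. Your argument can be repaired by observing that your $\scrG_\infty$, being locally absolutely pure, is automatically a pure subsheaf of $\scrF$, so the same quotient argument applies---but once you make that observation, the detour through iterated affine enlargement becomes unnecessary.
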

\begin{proof}
First note that over a  coherent ring, a quotient of an absolutely pure
module by a pure submodule is again absolutely pure \cite[proposition 4.2]{Pinzon}. So, using that,
we can say that a quotient of a locally absolutely pure by a pure quasi-coherent subsheaf is again locally absolutely pure.

Let $\lambda$ be the cardinality of the scheme $X$, that is, the
supremum of all cardinalities of $\O_X(U)$ for all  affine open
subset $U \subseteq X$. By \cite[corollary 3.5]{EE}, there is an
infinite cardinal $\kappa$ such that every quasi-coherent sheaf
can be written as a sum of quasi-coherent subsheaves of type
$\kappa$. In fact, every subsheaf with type $\kappa$ of a
quasi-coherent sheaf $\scrF$ can be embedded in a quasi-coherent
subsheaf of type $\kappa$ which is pure in $\scrF
$. Let $\mathcal{S}$
be the set of locally absolutely pure quasi-coherent sheaves of type
$\kappa$. By combining this  with the fact that the class of locally
absolutely pure quasi-coherent sheaves is closed under taking a
quotient by a pure quasi-coherent sheaf, it follows that each locally
absolutely pure quasi-coherent sheaf admits an
$\mathcal{S}$-filtration. So, every locally absolutely pure sheaf is
filtered  by those of type $\kappa$.

On the other hand, since absolutely pure modules are closed under
extensions and direct limits over a coherent ring, every
quasi-coherent sheaf on a locally coherent scheme possessing an
$\mathcal{S}$-filtration is also locally absolutely pure
quasi-coherent. So, the class of locally absolutely pure
quasi-coherent sheaves is  equal to the class
$\Filt(\mathcal{S})$ of all $\mathcal{S}$-filtered quasi-coherent sheaves. So, that class is precovering. Being closed
under direct limits also implies that the class of locally absolutely
pure quasi-coherent sheaves is covering.
\end{proof}

\end{document}